\crefname{hypothesis}{Hypothesis}{Hypotheses}
\Crefname{ALC@unique}{Line}{Lines}
\colorlet{texcscolor}{blue!50!black}
\colorlet{texemcolor}{red!70!black}
\colorlet{texpreamble}{red!70!black}
\colorlet{codebackground}{black!25!white!25}
\lstdefinestyle{siamlatex}{%
  style=tcblatex,
  texcsstyle=*\color{texcscolor},
  texcsstyle=[2]\color{texemcolor},
  keywordstyle=[2]\color{texemcolor},
  moretexcs={cref,Cref,maketitle,mathcal,text,headers,email,url},
}
\DeclareTotalTCBox{\code}{ v O{} }
{ 
  fontupper=\ttfamily\color{black},
  nobeforeafter,
  tcbox raise base,
  colback=codebackground,colframe=white,
  top=0pt,bottom=0pt,left=0mm,right=0mm,
  leftrule=0pt,rightrule=0pt,toprule=0mm,bottomrule=0mm,
  boxsep=0.5mm,
  #2}{#1}
\patchcmd\newpage{\vfil}{}{}{}
\title{Convergence Rates of Inertial Primal-Dual Dynamical  Methods for  Separable Convex Optimization Problems\thanks{The authors certify that the general content of the manuscript, in whole or in part, is not submitted, accepted, or published elsewhere, including conference proceedings.
\funding{This work was supported by  the National Natural Science  Foundation of China
(11471230) and the Scientific Research Foundation  of the Education Department of Sichuan Province (16ZA0213).}}}
\author{Xin He\thanks{Department of Mathematics, Sichuan University, Chengdu, Sichuan, P.R. China(\email{hexinuser@163.com}).}
\and Rong Hu\thanks{Department of Applied Mathematics, Chengdu University of Information Technology, Chengdu, Sichuan, P.R. China (\email{ronghumath@aliyun.com}).}
\and Ya Ping Fang\thanks{Department of Mathematics, Sichuan University, Chengdu, Sichuan, P.R. China(\email{ypfang@aliyun.com}, Corresponding author).}}
\begin{document}
\maketitle

\begin{abstract}
In this paper, we propose a second-order continuous primal-dual dynamical system with time-dependent positive damping terms for a separable convex optimization problem with linear equality constraints. By the Lyapunov function approach, we investigate asymptotic properties of  the proposed dynamical system  as the time $t\to+\infty$. The convergence rates
are derived for different choices of the damping coefficients. We also show that  the obtained results are robust under external perturbations.
\end{abstract}

\begin{keywords}
  Separable convex optimization problem, inertial primal-dual dynamical system,     
	 Lyapunov analysis,  convergence rate
\end{keywords}

\begin{AMS}
34D05, 37N40, 46N10, 90C25
\end{AMS}

\section{Introduction}

\subsection{Problem statement}
Throughout this paper we discuss in the Euclidean spaces with the inner $\langle \cdot,\cdot\rangle$ and the norm $\|\cdot\|$.  Let $f: \mathbb{R}^{n_1}\to\mathbb{R}$ and $g: \mathbb{R}^{n_2}\to\mathbb{R}$ be two smooth convex functions.  Consider the  separable convex optimization problem:
	\begin{eqnarray}\label{eq:ques}
				\min && \ f(x) + g(y)\nonumber\\
				s.t. \, &&  Ax+By = b,
	\end{eqnarray}
where $A\in\mathbb{R}^{m\times n_1}$, $B\in\mathbb{R}^{m\times n_2}$ and $b\in\mathbb{R}^{m}$. This problem plays important roles in diverse applied fields such as, machine learning, signal recovery, structured nonlinear theory and image recovery (see, e.g., \cite{Boyd,ChenL2019,Goldstein2014,Lin2019}).

Denoted by $\Omega$ the KKT point set of the problem \eqref{eq:ques}, i.e., $(x^*,y^*,\lambda^*)\in\Omega$ if and only if 
\begin{equation}\label{eq:saddle_point}
	\begin{cases}
		-A^T\lambda^* = \nabla f(x^*),\\
		-B^T\lambda^* = \nabla g(y^*),\\
		Ax^*+By^* -b =0.
	\end{cases}
\end{equation}
In what follows, we always suppose that  $\Omega\ne\emptyset$. It is well-known that  $(x^*,y^*)$ solves the problem \eqref{eq:ques} if and only if there exists $\lambda^*\in \mathbb{R}^{m}$ such that  $(x^*,y^*,\lambda^*)\in\Omega$. The augment Lagrangian function $\mathcal{L}:\mathbb{R}^{n_1}\times\mathbb{R}^{n_2}\times\mathbb{R}^m\to\mathbb{R}$, associated with the problem \eqref{eq:ques}, is defined by
\begin{equation}\label{eq:AugL}
	\mathcal{L}(x,y,\lambda) = f(x)+g(y)+\langle \lambda,Ax+By-b\rangle+\frac{1}{2}\|Ax+By-b\|^2. 
\end{equation}
Then,  $(x^*,y^*,\lambda^*)\in\Omega$ if and only if it is a saddle point of  $\mathcal{L}$, i.e.,
\[ \mathcal{L}(x^*,y^*,\lambda)\leq  \mathcal{L}(x^*,y^*,\lambda^*)\leq  \mathcal{L}(x,y,\lambda^*), \qquad \forall (x,y,\lambda)\in \mathbb{R}^{n_1}\times\mathbb{R}^{n_2}\times\mathbb{R}^m.\]  

Given a fixed $t_0> 0$, in terms of the augment Lagrangian function $ \mathcal{L}$, we propose the following inertial primal-dual dynamical system for solving the problem \eqref{eq:ques}:
\begin{equation*}
	\begin{cases}
		\ddot{x}(t)+\gamma(t)\dot{x}(t) = -\nabla_x \mathcal{L}(x(t),y(t),\lambda(t)+\delta(t)\dot{\lambda}(t)),\\
		\ddot{y}(t)+\gamma(t)\dot{y}(t) = -\nabla_y  \mathcal{L}(x(t),y(t),\lambda(t)+\delta(t)\dot{\lambda}(t)),\\
		\ddot{\lambda}(t)+\gamma(t)\dot{\lambda}(t) =\nabla_{\lambda}\mathcal{L}(x(t)+\delta(t)\dot{x}(t),y(t)+\delta(t)\dot{y}(t),\lambda(t)),
	\end{cases}
\end{equation*}
where $\gamma,\delta:[t_0,+\infty)\to (0,+\infty)$  are two continuous damping functions. 
By  computations, the inertial primal-dual dynamical system can be rewritten as follows:
\begin{equation}\label{dy:dy_unpertu}  
	\begin{cases}
		\ddot{x}(t)+\gamma(t)\dot{x}(t) = -\nabla f(x(t))-A^T(\lambda(t)+\delta(t)\dot{\lambda}(t))-A^T(Ax(t)+By(t)-b),\\
		\ddot{y}(t)+\gamma(t)\dot{y}(t) = -\nabla g(y(t))-B^T(\lambda(t)+\delta(t)\dot{\lambda}(t))-B^T(Ax(t)+By(t)-b),\\
		\ddot{\lambda}(t)+\gamma(t)\dot{\lambda}(t) = A(x(t)+\delta(t)\dot{x}(t))+B(y(t)+\delta(t)\dot{y}(t))-b.
	\end{cases}
\end{equation}
In this paper we shall discuss the convergence  rate analysis of the proposed  inertial primal-dual dynamical method for the problem \eqref{eq:ques} by  investigating the asymptotic behavior of the  inertial primal-dual dynamical system \eqref{dy:dy_unpertu} as $ t\to +\infty$. 

\subsection{Historical presentation}
In recent years, the second-order dynamical system method is  very popular for solving the unconstrained smooth optimization problem
\begin{equation}\label{eq:min_fun} 
   \min \Phi(x),
\end{equation}
where $\Phi(x)$ is a smooth cost function. To solve the problem \eqref{eq:min_fun},  Polyak \cite{Polyak1964,Polyak1987} introduced the heavy ball with friction system
\begin{equation}\label{dy:hball}
\ddot{x}(t)+\gamma\dot{x}(t)+\nabla \Phi(x(t))=0,
\end{equation}
where  $\gamma>0$ is a damping coefficient. Alvarez \cite{Alvarez2001}  studied the asymptotic behavior of the heavy ball with friction system \eqref{dy:hball} under the condition that $\Phi(x)$ is  convex. The asymptotic behavior of  \eqref{dy:hball} with $\Phi(x)$ being nonconvex was discussed by  B\'{e}gout et al. \cite{Begot2015}.  Haraux and Jendoubi \cite{Haraux2012} investigated the asymptotic behavior of the following  perturbed version of the heavy ball with friction system \eqref{dy:hball}:
\begin{equation}\label{dy:hball-p}
\ddot{x}(t)+\gamma\dot{x}(t)+\nabla \Phi(x(t))=\epsilon(t),
\end{equation}
where  $\epsilon(t)$  is used as a perturbation. When the positive damping coefficient is dependent upon the time $t$,   \eqref{dy:hball} and \eqref{dy:hball-p}  become, respectively, the  following inertial gradient system 
\begin{equation*}
 (IGS_{\gamma})\qquad	\ddot{x}(t)+\gamma(t)\dot{x}(t)+\nabla \Phi(x(t))=0,
\end{equation*}
and its perturbed version
\begin{equation*}
 (IGS_{\gamma,\epsilon})\qquad	\ddot{x}(t)+\gamma(t)\dot{x}(t)+\nabla \Phi(x(t))=\epsilon(t).
\end{equation*}
The importance of $(IGS_{\gamma})$ and  $(IGS_{\gamma,\epsilon})$ has been recognized in the fields of fast optimization methods, control theory, and mechanics. Here, we mention some nice works concerning  fast optimization methods.
Su et al. \cite{Su2014} pointed out that $(IGS)_{\gamma}$ with $\gamma(t)=\frac{3}{t}$ can be viewed as a continuous version of  the Nesterov's accelerated gradient algorithm (see \cite{Nesterov1983,Nesterov2013}). The convergence rate $\Phi(x(t))-\min \Phi=\mathcal{O}(\frac{1}{t^2})$ was also obtained in \cite{Su2014} for  $(IGS)_{\gamma}$ with  $\gamma(t)=\frac{\alpha}{t}$ when  $\alpha\geq 3$.   Attouch et al. \cite{AttouchCP2018} generalized this result  by showing that  $\Phi(x(t))-\min \Phi=\mathcal{O}(\frac{1}{t^2})$   for $(IGS)_{\gamma,\epsilon}$ with $\gamma(t)=\frac{\alpha}{t}$, $\alpha\ge 3$,  and $\epsilon(t)$  satisfying  $\int^{+\infty}_{t_0}t\|\epsilon(t)\|ds<+\infty$. In the case  $\gamma(t)=\frac{\alpha}{t}$ with  $\alpha > 3$,  May \cite{May2015}  proved an improved convergence rate  $\Phi(x(t))-\min \Phi= o(\frac{1}{t^2})$  for  $(IGS)_{\gamma}$. When $\gamma(t)=\frac{\alpha}{t}$ with  $\alpha\leq 3$,  it was shown in   \cite{AttouchCR2019,Vassilis2018} that the convergence rate of the values along the trajectory is $\Phi(x(t))-\min \Phi=\mathcal{O}(t^\frac{-2\alpha}{3}) $ for  $(IGS)_{\gamma}$. In the case  $\gamma(t)=\frac{\alpha}{t}$ and $\alpha>0$,   Aujol et al. \cite{Aujol2019} studied the convergence rate  of the values along the trajectory  under some additional geometrical conditions on $\Phi(x)$.  
When $\gamma(t)=\frac{\alpha}{t^r}$ with $r\in(0,1)$,  Cabot and Frankel \cite{CabotF2012} studied the asymptotic behavior of  $(IGS_{\gamma})$. Jendoubi and May \cite{Jendoubi} extended the results of Cabot and Frankel \cite{CabotF2012} to the perturbed case, and the corresponding convergence rate results can be found in \cite{Balti2016,May2015_2}. The results on asymptotic  behaviors of  $(IGS_{\gamma})$ and  $(IGS_{\gamma,\epsilon})$ with a general damping function $\gamma(t)$ can be found in \cite{AttouchC2017,AttouchCCR2018,AttouchCR2019HAL,CabotEG2007,CabotEG2009}. For more results on  second-order dynamical system approaches for unstrained optimization problems, we refer the reader to \cite{attouch2018comb,BotC2016,BotC2018,Luo2019,Sebbouh2020}.

For the  linear equality constrained optimization problem \eqref{eq:ques}, popular numerical  methods are based on the primal-dual framework (see, e.g., \cite{Beck2017,Boyd,Chambolle2011,Goldstein2014}).  In recent years, some first-order dynamical  system methods based on the primal-dual framework were proposed for solving the  problem
\eqref{eq:ques} (see, e.g.,\cite{CherukuriGC2017,Cherukuri2016,Feijer2010,ZengYP2016}). However, to the best of our knowledge,  second-order dynamical  system methods based on the primal-dual framework are less discussed. It is worth mentioning that  $IGS_{\gamma}$ and  $IGS_{\gamma,\epsilon}$  proposed for unstrained optimization problems cannot be directly applied to the primal-dual framework for the problem \eqref{eq:ques}. Recently, Zeng et al. \cite{ZengJ2019}  proposed the following second-order dynamical system based on the primal-dual framework for solving the problem \eqref{eq:ques} with $g(x)\equiv 0$ and $B = 0$:
\begin{equation*}	
	\begin{cases}
		\ddot{x}(t)+\frac{\alpha}{t}\dot{x}(t) = -\nabla f(x(t))-A^T(\lambda(t)+\beta t\dot{\lambda}(t))-A^T(Ax(t)-b),\\
		\ddot{\lambda}(t)+\frac{\alpha}{t}\dot{\lambda}(t) = A(x(t)+\beta t\dot{x}(t))-b
	\end{cases}
\end{equation*}
and proved  $\mathcal{L}(x(t),\lambda^*)-\mathcal{L}(x^*,\lambda^*)= \mathcal{O}(1/t^{\frac{2}{3}\min\lbrace 3,\alpha\rbrace})$ and  $\|Ax(t)-b\|= \mathcal{O}(1/t^{\frac{\min\lbrace 3,\alpha\rbrace}{3}})$ with $\alpha>0$ and $\beta = \frac{3}{2\min \lbrace 3,\alpha\rbrace}$.

\subsection{Organization}
In Section 2, based on new Lyapunov analysis, we obtain the existence and uniqueness of a global solution and discuss the asymptotic properties of the trajectories generated by the dynamic \eqref{dy:dy_unpertu}  when   $\gamma(t)$ meets certain conditions. The results  covers the ones of the Nesterov's accelerated  gradient system in which $\gamma(t)=\frac{\alpha}{t}$ with $\alpha>0$. In Section 3,  we establish the existence and uniqueness of a global solution and  investigate the asymptotic properties in the case $\gamma(t)=\frac{\alpha}{t^r}$ with $r\in(-1,1)$. Finally, in Section 4, we complement these results by showing that the results obtained are robust with respect to external perturbations.

\section{Asymptotic properties of  \eqref{dy:dy_unpertu} with a general  $\gamma(t)$ }

In this section we discuss the asymptotic behavior of \eqref{dy:dy_unpertu} with a general  $\gamma(t)$ as the time $t\to +\infty$.  To do this, we first establish the existence of a global solution of  the dynamic  \eqref{dy:dy_unpertu}. The following proposition, whose proof follows from the Picard-Lindelof Theorem (see \cite[Theorem 2.2]{Teschl2012}), establishes the existence and uniqueness of a local solution of the dynamic \eqref{dy:dy_unpertu}:
\begin{proposition}\label{pro:local_exist}
	Let $f$ and $g$ be two continuously differentiable functions such that $\nabla f$ and $\nabla g$ are locally Lipschitz continuous, and let $\gamma,\delta:[t_0,+\infty)\to (0,+\infty)$ be locally integrable. Then for any $(x_0,y_0,\lambda_0,u_0,v_0,w_0)$, there exists a unique solution $(x(t),y(t),\lambda(t))$ with $x(t)\in\mathcal{C}^2([t_0,T),\mathbb{R}^{n_1})$,  $y(t)\in\mathcal{C}^2([t_0,T),\mathbb{R}^{n_2})$ and  $\lambda(t)\in\mathcal{C}^2([t_0,T),\mathbb{R}^{m})$ of the dynamic \eqref{dy:dy_unpertu} satisfying $(x(t_0),y(t_0),\lambda(t_0))=(x_0,y_0,\lambda_0)$ and $(\dot{x}(t_0), \dot{y}(t_0),\dot{\lambda}(t_0))=(u_0,v_0,w_0)$ on a maximal interval $[t_0,T)\subset[t_0,+\infty)$.
\end{proposition}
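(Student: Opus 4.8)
The plan is to reduce the second-order system \eqref{dy:dy_unpertu} to a first-order system on an enlarged phase space and then apply the Picard--Lindel\"{o}f theorem \cite[Theorem 2.2]{Teschl2012}. First I would introduce the velocity variables $u=\dot x$, $v=\dot y$, $w=\dot\lambda$ and set $z=(x,y,\lambda,u,v,w)\in\mathbb{R}^{N}$ with $N=2(n_1+n_2+m)$. In these variables \eqref{dy:dy_unpertu} is equivalent to $\dot z(t)=F(t,z(t))$, where $F:[t_0,+\infty)\times\mathbb{R}^{N}\to\mathbb{R}^{N}$ is defined blockwise by
\begin{equation*}
F(t,z)=\Big(\,u,\ v,\ w,\ \Phi_1(t,z),\ \Phi_2(t,z),\ \Phi_3(t,z)\,\Big),
\end{equation*}
with
\begin{align*}
\Phi_1(t,z)&=-\gamma(t)u-\nabla f(x)-A^{T}\big(\lambda+\delta(t)w\big)-A^{T}(Ax+By-b),\\
\Phi_2(t,z)&=-\gamma(t)v-\nabla g(y)-B^{T}\big(\lambda+\delta(t)w\big)-B^{T}(Ax+By-b),\\
\Phi_3(t,z)&=-\gamma(t)w+A\big(x+\delta(t)u\big)+B\big(y+\delta(t)v\big)-b,
\end{align*}
and the Cauchy data $(x(t_0),y(t_0),\lambda(t_0),\dot x(t_0),\dot y(t_0),\dot\lambda(t_0))=(x_0,y_0,\lambda_0,u_0,v_0,w_0)$ becomes $z(t_0)=(x_0,y_0,\lambda_0,u_0,v_0,w_0)$.

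Next I would verify the hypotheses of the Cauchy--Lipschitz theorem for $F$. For each fixed $z$, the map $t\mapsto F(t,z)$ is locally integrable, since $\gamma$ and $\delta$ are locally integrable and the remaining terms do not depend on $t$. For each fixed $t$, the map $z\mapsto F(t,z)$ is locally Lipschitz: the first three blocks are linear, while in $\Phi_1,\Phi_2,\Phi_3$ the gradients $\nabla f$, $\nabla g$ are locally Lipschitz by hypothesis and the other terms are affine in $z$ with coefficients built from $A$, $B$, $\gamma(t)$ and $\delta(t)$. Quantitatively, on any ball $\{\,\|z\|\le R\,\}$ one obtains $\|F(t,z)-F(t,z')\|\le L_R(t)\,\|z-z'\|$ with $L_R(t)=c_R\big(1+\gamma(t)+\delta(t)\big)$, where $c_R$ depends only on $R$, $\|A\|$, $\|B\|$ and the local Lipschitz moduli of $\nabla f,\nabla g$ on the relevant balls; in particular $L_R$ is locally integrable on $[t_0,+\infty)$.

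Then I would apply \cite[Theorem 2.2]{Teschl2012} to obtain, for every initial datum, a unique maximal solution $z(\cdot)$ of $\dot z=F(t,z)$ on some maximal interval $[t_0,T)\subset[t_0,+\infty)$; undoing the substitution, $(x(t),y(t),\lambda(t))$ solves \eqref{dy:dy_unpertu} with the prescribed Cauchy data and $\dot x=u$, $\dot y=v$, $\dot\lambda=w$, so that the required regularity $x\in\mathcal{C}^2([t_0,T),\mathbb{R}^{n_1})$, $y\in\mathcal{C}^2([t_0,T),\mathbb{R}^{n_2})$, $\lambda\in\mathcal{C}^2([t_0,T),\mathbb{R}^{m})$ follows because the right-hand sides of \eqref{dy:dy_unpertu} are continuous along the trajectory. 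The argument is essentially routine; the only points deserving care are (i) absorbing the time-dependent coefficients $\gamma(t)$ and $\delta(t)$ into a \emph{locally integrable} Lipschitz bound, so that the Carath\'{e}odory/Picard--Lindel\"{o}f hypotheses genuinely hold, and (ii) the fact that $\nabla f$, $\nabla g$ are only \emph{locally} Lipschitz, which is precisely why one gets a solution on a maximal interval rather than a global one.
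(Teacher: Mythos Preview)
Your proposal is correct and follows exactly the route the paper indicates: the paper does not give a detailed proof but simply states that the proposition ``follows from the Picard--Lindel\"{o}f Theorem (see \cite[Theorem 2.2]{Teschl2012})'', and your reduction to a first-order system on the enlarged phase space together with the verification of the Carath\'{e}odory/Lipschitz hypotheses is precisely the standard way to make that citation rigorous. The only minor caveat is that your justification of $\mathcal{C}^2$ regularity (``the right-hand sides are continuous along the trajectory'') tacitly uses continuity of $\gamma$ and $\delta$ rather than mere local integrability, but this is an issue inherited from the statement itself and is harmless since the paper elsewhere assumes $\gamma,\delta$ continuous.
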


To analyze the asymptotic behavior of  the dynamic \eqref{dy:dy_unpertu}, it is necessary to prove the existence of a global solution. To do so, we  introduce the following function $p:[t_0,+\infty)\to[1,+\infty)$ defined by
\begin{equation}\label{eq:pt}
p(t) = e^{\int_{t_0}^t\gamma(s) ds},\quad \forall t\geq t_0,
\end{equation}
which will be used for  convergence rate analysis. It is easy to verify that
\begin{equation}\label{eq:pt_der}
	\dot{p}(t) = p(t)\gamma(t), \qquad\forall t\geq t_0. 
\end{equation}
Fix $(x^*,y^*,\lambda^*)\in \Omega$. Then  we have $\mathcal{L}(x(t),y(t),\lambda^*)-\mathcal{L}(x^*,y^*,\lambda^* )\geq 0$ for all $t\in [t_0,T)$. Consider the energy function $\mathcal{E}_{\theta,\eta}^{\beta}:[t_0,T)\to[0,+\infty)$ defined by
\begin{equation}\label{eq:energy_fun_gen}
	\mathcal{E}_{\theta,\eta}^{\beta}(t) = \mathcal{E}_0(t)+\mathcal{E}_1(t)+\mathcal{E}_2(t)+\mathcal{E}_3(t),
\end{equation}
where
\begin{equation*}
	\begin{cases}
		\mathcal{E}_0(t) =p(t)^{2\beta}(\mathcal{L}(x(t),y(t),\lambda^*)-\mathcal{L}(x^*,y^*,\lambda^*)	),\\
		\mathcal{E}_1(t) = \frac{1}{2}\|\theta(t)(x(t)-x^*)+p(t)^{\beta}\dot{x}(t)\|^2+\frac{\eta(t)}{2}\|x(t)-x^*\|^2,\\
		\mathcal{E}_2(t) = \frac{1}{2}\|\theta(t)(y(t)-y^*)+p(t)^{\beta}\dot{y}(t)\|^2+\frac{\eta(t)}{2}\|y(t)-y^*\|^2,\\
		\mathcal{E}_3(t) = \frac{1}{2}\|\theta(t)(\lambda(t)-\lambda^*)+p(t)^{\beta}\dot{\lambda}(t)\|^2+\frac{\eta(t)}{2}\|\lambda(t)-\lambda^*\|^2,
	\end{cases}
\end{equation*}
$\theta,\eta:[t_0,+\infty)\to [0,+\infty)$ are two suitable functions, and $\beta$ is a positive constant.

Multiplying the first equation of \eqref{dy:dy_unpertu} by $p(t)^{\beta}$ we get
\[p(t)^{\beta}\ddot{x}(t) = -p(t)^{\beta}(\gamma(t)\dot{x}(t)+\nabla f(x(t))+A^T(\lambda(t)+\delta(t)\dot{\lambda}(t))+A^T(Ax(t)+By(t)-b)), \]
which together with  \eqref{eq:pt_der} yields
\begin{eqnarray*}
	&&\dot{\mathcal{E}}_1(t)=\langle 
\theta(t)(x(t)-x^*)+p(t)^{\beta}\dot{x}(t),	\dot{\theta}(t)(x(t)-x^*)+(\theta(t)+\beta p(t)^{\beta-1}\dot{p}(t))\dot{x}(t)\rangle\\
&&\quad+\langle 
\theta(t)(x(t)-x^*)+p(t)^{\beta}\dot{x}(t),p(t)^{\beta}\ddot{x}(t)\rangle+ \frac{\dot{\eta}(t)}{2}\|x(t)-x^*\|^2\\
&&\quad+ \eta(t) \langle x(t)-x^*,\dot{x}(t)\rangle \\
	&& =  \langle 
\theta(t)(x(t)-x^*)+p(t)^{\beta}\dot{x}(t),	\dot{\theta}(t)(x(t)-x^*)+(\theta(t)+(\beta-1)p(t)^{\beta}\gamma(t))\dot{x}(t)\rangle\\
\quad &&\quad -p(t)^{\beta}\langle \theta(t)(x(t)-x^*)+p(t)^{\beta}\dot{x}(t),	\nabla f(x(t))+A^T(\lambda(t)+\delta(t)\dot{\lambda}(t))\rangle\\
	&&\quad-p(t)^{\beta}\langle \theta(t)(x(t)-x^*)+p(t)^{\beta}\dot{x}(t),   A^T(Ax(t)+By(t)-b)\rangle \\
	&&\quad+\frac{\dot{\eta}(t)}{2}\|x(t)-x^*\|^2+ \eta(t) \langle x(t)-x^*,\dot{x}(t)\rangle \\
    && = (\theta(t)\dot{\theta}(t)+\frac{\dot{\eta}(t)}{2})\|x(t)-x^*\|^2+p(t)^{\beta}(\theta(t)+(\beta-1)p(t)^{\beta}\gamma(t))\|\dot{x}(t)\|^2\\
    &&\quad+(\theta(t)(\theta(t)+(\beta-1)p(t)^{\beta}\gamma(t))+\dot{\theta}(t)p(t)^{\beta}+\eta(t))\langle x(t)-x^*,\dot{x}(t)\rangle\\
	&&\quad - \theta(t)p(t)^{\beta}\langle x(t)-x^*,\nabla f(x(t))+A^T(\lambda(t)+\delta(t)\dot{\lambda}(t))\rangle\\
	&&\quad -\theta(t)p(t)^{\beta}\langle Ax(t)-Ax^*,Ax(t)+By(t)-b\rangle\\
	&&\quad - p(t)^{2\beta}\langle \dot{x}(t),\nabla f(x(t))+A^T(\lambda(t)+\delta(t)\dot{\lambda}(t))+A^T(Ax(t)+By(t)-b)\rangle.\\
\end{eqnarray*}
By similar arguments, we have
\begin{eqnarray*}
	&& \dot{\mathcal{E}}_2(t) =(\theta(t)\dot{\theta}(t)+\frac{\dot{\eta}(t)}{2})\|y(t)-y^*\|^2 + p(t)^{\beta}(\theta(t)+(\beta-1)p(t)^{\beta}\gamma(t))\|\dot{y}(t)\|^2\\
	&& \quad +(\theta(t)(\theta(t)+(\beta-1)p(t)^{\beta}\gamma(t))+\dot{\theta}(t)p(t)^{\beta}+\eta(t))\langle y(t)-y^*,\dot{y}(t)\rangle\\
 	&&\quad - \theta(t)p(t)^{\beta}\langle y(t)-y^*,\nabla g(y(t))+B^T(\lambda(t)+\delta(t)\dot{\lambda}(t))\rangle\\
  	&&\quad - \theta(t)p(t)^{\beta}\langle By(t)-By^*,Ax(t)+By(t)-b\rangle\\
 	&&\quad - p(t)^{2\beta}\langle \dot{y}(t),\nabla g(y(t))+B^T(\lambda(t)+\delta(t)\dot{\lambda}(t))+B^T(Ax(t)+By(t)-b)\rangle
\end{eqnarray*}
and
\begin{eqnarray*}
	&&\dot{\mathcal{E}}_3(t) = (\theta(t)\dot{\theta}(t)+\frac{\dot{\eta}(t)}{2})\|\lambda(t)-\lambda^*\|^2 + p(t)^\beta(\theta(t)+(\beta-1)p(t)^{\beta}\gamma(t))\|\dot{\lambda}(t)\|^2\\
	&&\quad+(\theta(t)(\theta(t)+(\beta-1)p(t)^\beta\gamma(t))+\dot{\theta}(t)p(t)^\beta+\eta(t))\langle \lambda(t)-\lambda^*,\dot{\lambda}(t)\rangle\\
	&&\quad+ \theta(t)p(t)^\beta\langle \lambda(t)-\lambda^*,A(x(t)+\delta(t)\dot{x}(t))+B(y(t)+\delta(t)\dot{y}(t))-b\rangle\\
	&&\quad + p(t)^{2\beta}\langle \dot{\lambda}(t),A(x(t)+\delta(t)\dot{x}(t))+B(y(t)+ \delta(t)\dot{y}(t))-b
\rangle.
\end{eqnarray*}
Adding $\dot{\mathcal{E}}_1(t)$, $\dot{\mathcal{E}}_2(t)$, $\dot{\mathcal{E}}_3(t)$ together, using $Ax^*+By^*= b$ and rearranging the terms, we have
\[	\dot{\mathcal{E}}_1(t)+\dot{\mathcal{E}}_2(t)+\dot{\mathcal{E}}_3(t) = \sum^5_{i=1} \mathcal{V}_i(t),\] 
where
\begin{eqnarray*}
	 \mathcal{V}_1(t) &=&  \left(\theta(t)\dot{\theta}(t)+\frac{\dot{\eta}(t)}{2}\right)(\|x(t)-x^*\|^2+\|y(t)-y^*\|^2+\|\lambda(t)-\lambda^*\|^2),\\
	 \mathcal{V}_2(t) &=&(\theta(t)(\theta(t)+(\beta-1)p(t)^\beta\gamma(t))+\dot{\theta}(t)p(t)^\beta+\eta(t))\\
	 &&\times(\langle x(t)-x^*,\dot{x}(t)\rangle+\langle y(t)-y^*,\dot{y}(t)\rangle+\langle \lambda(t)-\lambda^*,\dot{\lambda}(t)\rangle),\\
	 \mathcal{V}_3(t) &=& -\theta(t)p(t)^\beta(\langle x(t)-x^*, \nabla f(x(t))+A^T\lambda^*\rangle+\langle y(t)-y^*, \nabla g(y(t))+B^T\lambda^*\rangle)\\
		\quad &&+ \theta(t)p(t)^\beta\delta(t) \langle \lambda(t)-\lambda^*, A\dot{x}(t)+B\dot{y}(t)\rangle,
\\
	 \mathcal{V}_4(t) &=&p(t)^\beta(\theta(t)+(\beta-1)p(t)^\beta\gamma(t))(\|\dot{x}(t)\|^2+\|\dot{y}(t)\|^2+\|\dot{\lambda}(t)\|^2)\\
	 && -\theta(t)p(t)^\beta\|Ax(t)+By(t)-b\|^2,\\
	 \mathcal{V}_5(t) &=&(p(t)^{2\beta}-\theta(t) p(t)^\beta\delta(t))\langle \dot{\lambda}(t),Ax(t)+By(t)-b\rangle\\
	\quad && -p(t)^{2\beta}\langle \dot{x}(t),\nabla f(x(t))+A^T\lambda(t)+A^T(Ax(t)+By(t)-b)\rangle\\
	\quad && -p(t)^{2\beta}\langle \dot{y}(t),\nabla g(y(t))+B^T\lambda(t)+B^T(Ax(t)+By(t)-b)\rangle.
\end{eqnarray*}
Derivate $\mathcal{E}_0(t)$ to get
\begin{eqnarray*}
	&&\dot{\mathcal{E}}_0(t)=2\beta p(t)^{2\beta}\gamma(t)(f(x(t))-f(x^*)+g(y(t))-g(y^*)+\langle \lambda^*,Ax(t)+By(t)-b\rangle)\\
	&&\quad+\beta p(t)^{2\beta}\gamma(t)\|Ax(t)+By(t)-b\|^2 + p(t)^{2\beta}(\langle \nabla f(x(t)),\dot{x}(t)\rangle+\langle \nabla g(y(t)),\dot{y}(t)\rangle)\\
	&&\quad+p(t)^{2\beta}(\langle \lambda^*, A\dot{x}(t)+B\dot{y}(t)\rangle+\langle Ax(t)+By(t)-b,A\dot{x}(t)+B\dot{y}(t)\rangle).
\end{eqnarray*}

Now we are in a position to investigate the existence and uniqueness of a global solution of the dynamic \eqref{dy:dy_unpertu} with suitable choices of $\gamma(t)$ and $\delta(t)$. 
\begin{theorem}\label{th:th_exist_gen}
	Let $f$ and $g$ be two continuously differentiable functions such that $\nabla f$ and $\nabla g$ are locally Lipschitz continuous, $\gamma:[t_0,+\infty)\to(0,+\infty)$ be a nonincreasing  and twice continuously differentiable function satisfying
			\[ \ddot{\gamma}(t)\geq 2\beta^2 \gamma(t)^3,    \qquad\forall t\geq t_0 \] for some $\beta\in(0,\frac{1}{3})$, and $\delta(t)=\frac{1}{\beta_0\gamma(t)}$ with $\beta_0\in[2\beta,1-\beta)$.
Let $(x^*,y^*,\lambda^*)\in \Omega$ and  $(x(t),y(t),\lambda(t))$ be the unique solution of the dynamic \eqref{dy:dy_unpertu}  defined on a maximal interval $[t_0,T)$ with $T\leq +\infty$ for some initial value.  Then, the following conclusions hold:
	\begin{itemize}
		\item [(a)]  There exist positive functions $\theta(t)$ and $\eta(t)$ satisfying
		\[\dot{\mathcal{E}}_{\theta,\eta}^{\beta}(t)\leq 0,\quad \forall t\in[t_0,T). \]
		As a consequence,  the function  $\mathcal{E}_{\theta,\eta}^{\beta}(t)$  is nonincreasing on $[t_0 , T)$.
		\item [(b)] $T=+\infty$
.
\end{itemize}
\end{theorem}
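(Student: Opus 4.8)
The plan is to differentiate the Lyapunov function. By the identities already established, $\dot{\mathcal{E}}^{\beta}_{\theta,\eta}(t)=\dot{\mathcal{E}}_0(t)+\sum_{i=1}^{5}\mathcal{V}_i(t)$, and the whole game is to pick the free weights $\theta(t),\eta(t)>0$ so that every surviving term is manifestly nonpositive. The crucial observation is that $\delta(t)=\frac1{\beta_0\gamma(t)}$ has been calibrated so that taking
\[
\theta(t)=\beta_0\,p(t)^{\beta}\gamma(t)
\]
makes $\theta(t)p(t)^{\beta}\delta(t)=p(t)^{2\beta}$. Consequently the coefficient $p^{2\beta}-\theta p^{\beta}\delta$ in front of $\langle\dot\lambda,Ax+By-b\rangle$ in $\mathcal{V}_5$ vanishes, and --- after grouping $\dot{\mathcal{E}}_0$ with $\mathcal{V}_3,\mathcal{V}_5$ and using $Ax^*+By^*=b$, $-A^{T}\lambda^*=\nabla f(x^*)$, $-B^{T}\lambda^*=\nabla g(y^*)$ --- the mixed terms $\langle\lambda-\lambda^*,A\dot x+B\dot y\rangle$ cancel as well, leaving
\begin{align*}
\dot{\mathcal{E}}_0+\mathcal{V}_3+\mathcal{V}_5 &= 2\beta p^{2\beta}\gamma\,(D_f+D_g)+\beta p^{2\beta}\gamma\|Ax+By-b\|^{2}\\
&\quad-\beta_0 p^{2\beta}\gamma\big(\langle x-x^*,\nabla f(x)+A^{T}\lambda^*\rangle+\langle y-y^*,\nabla g(y)+B^{T}\lambda^*\rangle\big),
\end{align*}
where $D_f=f(x)-f(x^*)-\langle\nabla f(x^*),x-x^*\rangle\ge0$ and $D_g$ is its analogue. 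By convexity, $D_f\le\langle\nabla f(x)-\nabla f(x^*),x-x^*\rangle=\langle\nabla f(x)+A^{T}\lambda^*,x-x^*\rangle$ (and likewise for $g$), both inner products being nonnegative; since $\beta_0\ge2\beta$, this whole block is $\le\beta p^{2\beta}\gamma\|Ax+By-b\|^{2}$. Because $\mathcal{V}_4=(\beta_0+\beta-1)p^{2\beta}\gamma(\|\dot x\|^{2}+\|\dot y\|^{2}+\|\dot\lambda\|^{2})-\beta_0 p^{2\beta}\gamma\|Ax+By-b\|^{2}$ with $\beta_0+\beta<1$, one gets $\mathcal{V}_4\le-\beta_0 p^{2\beta}\gamma\|Ax+By-b\|^{2}$, hence $\dot{\mathcal{E}}_0+\mathcal{V}_3+\mathcal{V}_4+\mathcal{V}_5\le(\beta-\beta_0)p^{2\beta}\gamma\|Ax+By-b\|^{2}\le0$.

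It remains to dispose of $\mathcal{V}_1+\mathcal{V}_2$. I would then define $\eta$ so as to annihilate the cross term in $\mathcal{V}_2$, namely
\[
\eta(t)=-\theta(t)\big(\theta(t)+(\beta-1)p(t)^{\beta}\gamma(t)\big)-\dot\theta(t)p(t)^{\beta}
=\beta_0\,p(t)^{2\beta}\big((1-2\beta-\beta_0)\gamma(t)^{2}-\dot\gamma(t)\big),
\]
using $\dot p=p\gamma$. With this choice $\mathcal{V}_2\equiv0$ and $\mathcal{V}_1=(\theta\dot\theta+\tfrac12\dot\eta)(\|x-x^*\|^{2}+\|y-y^*\|^{2}+\|\lambda-\lambda^*\|^{2})$, so it is enough to verify $\eta>0$ and $\theta\dot\theta+\tfrac12\dot\eta\le0$.

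The technical heart --- the step I expect to be the main obstacle --- is to deduce from the single hypothesis $\ddot\gamma\ge2\beta^{2}\gamma^{3}$ the pointwise bound $-\dot\gamma(t)\ge\beta\gamma(t)^{2}$. The argument I have in mind: $\gamma>0$ nonincreasing gives $\dot\gamma\le0$, while $\ddot\gamma\ge2\beta^{2}\gamma^{3}>0$ makes $\dot\gamma$ strictly increasing, so $\dot\gamma\uparrow0$ (a negative limit would drive $\gamma$ negative); then $\gamma\downarrow L\ge0$, and $L>0$ is impossible since $\ddot\gamma\ge2\beta^{2}L^{3}>0$ would force $\dot\gamma\to+\infty$, so $\gamma\to0$. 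Now $\frac{d}{dt}(\dot\gamma^{2}-\beta^{2}\gamma^{4})=2\dot\gamma(\ddot\gamma-2\beta^{2}\gamma^{3})\le0$, so $\dot\gamma^{2}-\beta^{2}\gamma^{4}$ is nonincreasing with limit $0$, hence $\ge0$ everywhere; that is, $-\dot\gamma\ge\beta\gamma^{2}$. Granting this: since $\beta_0<1-\beta$ we have $2\beta+\beta_0-1<\beta$, so $-\dot\gamma\ge\beta\gamma^{2}>(2\beta+\beta_0-1)\gamma^{2}$, which makes the bracket defining $\eta$ strictly positive; and a routine computation (again via $\dot p=p\gamma$) reduces $\theta\dot\theta+\tfrac12\dot\eta\le0$ to $\ddot\gamma\ge2\beta(1-2\beta)\gamma^{3}+2(1-3\beta)\gamma\dot\gamma$, whose right-hand side is $\le2\beta^{2}\gamma^{3}\le\ddot\gamma$ precisely because $\beta<\tfrac13$ and $-\dot\gamma\ge\beta\gamma^{2}$. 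Hence $\mathcal{V}_1\le0$, and adding everything gives $\dot{\mathcal{E}}^{\beta}_{\theta,\eta}(t)\le0$ on $[t_0,T)$; monotonicity of $\mathcal{E}^{\beta}_{\theta,\eta}$ is then immediate. This proves (a).

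For (b), suppose $T<+\infty$. By (a) and $\mathcal{E}^{\beta}_{\theta,\eta}\ge0$, the functions $\mathcal{E}_0,\dots,\mathcal{E}_3$ are bounded on $[t_0,T)$; since $\theta,\eta$ are continuous and positive on the compact interval $[t_0,T]$ and $p\ge1$, this forces $x,y,\lambda$ and $\dot x,\dot y,\dot\lambda$ to stay bounded on $[t_0,T)$, whence the right-hand side of \eqref{dy:dy_unpertu}, being continuous in its arguments, is bounded, so $\ddot x,\ddot y,\ddot\lambda$ are bounded as well. Then $\dot x,\dot y,\dot\lambda$ are Lipschitz on $[t_0,T)$, so $(x,y,\lambda,\dot x,\dot y,\dot\lambda)(t)$ converges as $t\to T^{-}$; applying Proposition~\ref{pro:local_exist} from that limit point continues the solution beyond $T$, contradicting maximality. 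Therefore $T=+\infty$.
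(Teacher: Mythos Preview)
Your proof is correct and follows essentially the same approach as the paper: the same choices $\theta(t)=\beta_0 p(t)^\beta\gamma(t)$ and $\eta(t)=-\beta_0 p(t)^{2\beta}((\beta_0+2\beta-1)\gamma(t)^2+\dot\gamma(t))$, the same reduction of $\theta\dot\theta+\tfrac12\dot\eta\le0$ to the hypothesis $\ddot\gamma\ge2\beta^2\gamma^3$ via the intermediate bound $-\dot\gamma\ge\beta\gamma^2$ (which the paper quotes as Lemma~\ref{le:A1} and you re-derive inline), and the same convexity/cancellation bookkeeping for $\dot{\mathcal{E}}_0+\mathcal{V}_3+\mathcal{V}_4+\mathcal{V}_5$. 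For (b) your compactness argument on $[t_0,T]$ is a mild streamlining of the paper's explicit bounds, but the overall strategy and conclusion are identical.
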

\begin{proof}
(a):  
Take
\begin{equation}\label{eq:theta_eta_gen}
	\theta(t) = \beta_0 p(t)^\beta\gamma(t)  \quad\text{and}\quad \eta(t) = -\beta_0 p(t)^{2\beta}((\beta_0+2\beta-1)\gamma(t)^2+\dot{\gamma}(t)).
	\end{equation}
Clearly, $\theta(t)>0$ for all $t\ge t_0$.  By assumption, we have 
\[ \ddot{\gamma}(t)\geq 2\beta^2\gamma(t)^3, \quad\forall t\geq t_0.\]
This together with  Lemma \ref{le:A1} yields
\begin{equation}\label{eq:gamma_sec_der}
	 \dot{\gamma}(t)\leq -\beta\gamma(t)^2, \quad \forall t\geq t_0.
\end{equation}
Since   $\beta\in(0,\frac{1}{3})$ and $\beta_0\in[2\beta,1-\beta)$, it follows from \eqref{eq:theta_eta_gen} and \eqref{eq:gamma_sec_der} that 
\begin{equation}\label{eq:eta_gen}
	\eta(t) \geq  \beta_0(1-\beta-\beta_0) p(t)^{2\beta}\gamma(t)^2> 0
\end{equation}
for $t\geq t_0$. By computations, we have 
\begin{equation}\label{eq:theta_eta_gen_der}
\begin{cases}
	\dot{\theta}(t) = \beta_0 p(t)^\beta (\beta\gamma(t)^2+\dot{\gamma}(t)),\\
	\dot{\eta}(t) = -\beta_0 p(t)^{2\beta}(2\beta(\beta_0+2\beta-1)\gamma(t)^3+(6\beta+2\beta_0-2)\gamma(t)\dot{\gamma}(t)+\ddot{\gamma}(t)). 
\end{cases}
\end{equation}

We shall prove that for any  $t\geq t_0$,
\begin{eqnarray}
	 \theta(t)\dot{\theta}(t)+\frac{\dot{\eta}(t)}{2} &\leq & 0, \label{eq:cond_gen_1}\\
	 p(t)^{2\beta} - \theta(t)p(t)^{\beta}\delta(t) &=& 0\label{eq:cond_gen_2},\\
	\theta(t)(\theta(t)+(\beta-1)p(t)^\beta\gamma(t))+\dot{\theta}(t)p(t)^\beta+\eta(t) &=& 0.\label{eq:cond_gen_3}
	\end{eqnarray}
Using \eqref{eq:theta_eta_gen} and \eqref{eq:theta_eta_gen_der}, by simple computations  we get \eqref{eq:cond_gen_2} and  \eqref{eq:cond_gen_3}. Next, we shall show \eqref{eq:cond_gen_1}.
Again from  \eqref{eq:theta_eta_gen} and \eqref{eq:theta_eta_gen_der} we have
\begin{eqnarray*}
	\theta(t)\dot{\theta}(t)+\frac{\dot{\eta}(t)}{2} &=& -\frac{\beta_0}{2} p(t)^{2\beta}(\ddot{\gamma}(t)+(6\beta-2)\gamma(t)\dot{\gamma}(t)+2\beta(2\beta-1)\gamma(t)^3)\\
	&=&  -\frac{\beta_0}{2} p(t)^{2\beta}(\ddot{\gamma}(t)-2\beta^2\gamma(t)^3+2(3\beta-1)\gamma(t)(\dot{\gamma}(t)+\beta\gamma(t)^2)).
\end{eqnarray*}
This together with \eqref{eq:gamma_sec_der} and assumption yields   \eqref{eq:cond_gen_1} since  $\beta\in(0,\frac{1}{3})$ and $\beta_0\in[2\beta,1-\beta)$.  Thus, we have proved \eqref{eq:cond_gen_1} - \eqref{eq:cond_gen_3}.

From \eqref{eq:cond_gen_1} and  \eqref{eq:cond_gen_3}  we have $\mathcal{V}_1(t)\leq 0$ and $ \mathcal{V}_2(t)=0$ for any $t\in[t_0,T)$. Since  $f$ and $g$ are convex, it follows from \eqref{eq:cond_gen_2} that
\begin{eqnarray}\label{eq:energy_der_gen}
	&&\dot{\mathcal{E}}_{\theta,\eta}^{\beta}(t)\leq\dot{\mathcal{E}}_0(t)+ \mathcal{V}_3(t)+\mathcal{V}_4(t)+\mathcal{V}_5(t)\nonumber \\
	 &&\quad= \beta_0 p(t)^{2\beta}\gamma(t)(f(x(t))-f(x^*)-\langle x(t)-x^*, \nabla f(x(t))\rangle\nonumber \\
	  &&\qquad+\beta_0 p(t)^{2\beta}\gamma(t)(g(y(t))-g(y^*)-\langle y(t)-y^*, \nabla g(y(t))\rangle) \nonumber\\
	 &&\qquad -  (\beta_0-2\beta)p(t)^{2\beta}\gamma(t)(\mathcal{L}(x(t),y(t),\lambda^*)-\mathcal{L}(x^*,y^*,\lambda^*))\nonumber \\
	 &&\qquad -\frac{\beta_0}{2} p(t)^{2\beta}\gamma(t)\|Ax(t)+By(t)-b\|^2 \\
	 &&\qquad-(1-\beta-\beta_0)   p(t)^{2\beta}\gamma(t)(\|\dot{x}(t)\|^2+\|\dot{y}(t)\|^2+\|\dot{\lambda}(t)\|^2)\nonumber  \\
	 &&\quad\leq -  (\beta_0-2\beta)p(t)^{2\beta}\gamma(t)(\mathcal{L}(x(t),y(t),\lambda^*)-\mathcal{L}(x^*,y^*,\lambda^*))\nonumber \\
	 && \qquad-\frac{\beta_0}{2} p(t)^{2\beta}\gamma(t)\|Ax(t)+By(t)-b\|^2 \nonumber\\
	 &&\qquad-(1-\beta-\beta_0)   p(t)^{2\beta}\gamma(t)(\|\dot{x}(t)\|^2+\|\dot{y}(t)\|^2+\|\dot{\lambda}(t)\|^2)  \nonumber \\
	 &&\quad\leq 0  \nonumber
	\end{eqnarray}
for any $t\in[t_0,T)$.  As a consequence,  the function  $\mathcal{E}_{\theta,\eta}^{\beta}(t)$  is nonincreasing on $[t_0, T)$.

(b): By (a),  $\mathcal{E}_{\theta,\eta}^{\beta}(t)$  is nonincreasing on $[t_0 , T)$. Then,
\begin{equation*}\label{eq:energy_decrease_gen}
	\mathcal{E}_{\theta,\eta}^{\beta}(t)\leq  \mathcal{E}_{\theta,\eta}^{\beta}(t_0),\quad \forall t\in [t_0,T).
\end{equation*}
This implies that $\mathcal{E}_{\theta,\eta}^{\beta}(\cdot)$ is bounded on $[t_0,T)$. 
It follows from \eqref{eq:energy_fun_gen} that
\[\frac{1}{2}\|\theta(t)(x(t)-x^*)+p(t)^{\beta}\dot{x}(t)\|^2 +\frac{\eta(t)}{2}\|x(t)-x^*\|^2 \leq   \mathcal{E}_{\theta,\eta}^{\beta}(t_0),\quad\forall t\in[t_0,T),
 \]
 where $\theta(t)$ and $\eta(t)$ are defined by \eqref{eq:theta_eta_gen}. This implies that
\begin{equation}\label{eq:bound_x_exist_gen}
\eta(t)\|x(t)-x^*\|^2 \leq   2\mathcal{E}_{\theta,\eta}^{\beta}(t_0),\quad \forall t\in[t_0,T)
\end{equation}
and
\begin{equation}\label{eq:bound_dotx_exist_gen}
\|\theta(t)(x(t)-x^*)+p(t)^{\beta}\dot{x}(t)\| \leq \sqrt{ 2\mathcal{E}_{\theta,\eta}^{\beta}(t_0)},\quad\forall t\in[t_0,T).
\end{equation}
Combining  \eqref{eq:bound_x_exist_gen} with   \eqref{eq:eta_gen} we get
 \begin{equation*}
 	 \beta_0(1-\beta-\beta_0) p(t)^{2\beta}\gamma(t)^2\|x(t)-x^*\|^2\leq    2\mathcal{E}_{\theta,\eta}^{\beta}(t_0),\quad \forall t\in[t_0,T), 
 \end{equation*}
which yields
\[\sup_{t\in[t_0,T)}p(t)^{\beta}\gamma(t)\|x(t)-x^*\|<+\infty. \]
It follows from \eqref{eq:bound_dotx_exist_gen} and \eqref{eq:theta_eta_gen} that
\[p(t)^{\beta}\|\dot{x}(t)\|\leq \sqrt{2\mathcal{E}_{\theta,\eta}^{\beta}(t_0)}+\beta_0 p(t)^{\beta}\gamma(t)\|x(t)-x^*\|,\quad\forall  t\in[t_0,T).\]
Since $p(t)\geq 1$, we have 
\[\sup_{t\in[t_0,T)}\|\dot{x}(t)\|\leq  \sqrt{2\mathcal{E}_{\theta,\eta}^{\beta}(t_0)}+\beta_0\sup_{t\in[t_0,T)}p(t)^{\beta}\gamma(t)\|x(t)-x^*\|<+\infty. \]
By similar arguments, we have 
\[\sup_{t\in[t_0,T)}\|\dot{y}(t)\|< +\infty\quad \text{and}\quad   \sup_{t\in[t_0,T)}\|\dot{\lambda}(t)\|<+\infty. \] 
Assume on the contrary that $T<+\infty$. Clearly, the trajectory $(x(t),y(t),\lambda(t))$ is  bounded on $[t_0 , T)$. By assumption and  \eqref{dy:dy_unpertu}, $(\ddot{x}(t),\ddot{y}(t),\ddot{\lambda}(t))$ are bounded on  $[t_0 , T)$. It ensues that  both  $(x(t),y(t),\lambda(t))$  and its derivative  $(\dot{x}(t),\dot{y}(t),\dot{\lambda}(t))$ have a limit at $t = T$, and therefore can be continued, a contradiction. 	Thus $T = +\infty$.
\end{proof}

\begin{remark}
   To establish the existence and uniqueness of a global  solution of  \eqref{dy:dy_unpertu},  it is assumed  in Theorem \ref{th:th_exist_gen} that
\[ \ddot{\gamma}(t)\geq 2\beta^2 \gamma(t)^3,    \quad \forall  t\geq t_0 \] for some $\beta\in(0,\frac{1}{3})$, and $\delta(t)=\frac{1}{\beta_0\gamma(t)}$ with $\beta_0\in[2\beta,1-\beta)$.
From the proof, it is easy to see that the conclusion (b) of Theorem \ref{th:th_exist_gen} still holds if $\beta=\frac{1}{3}$ and $\delta(t)=\frac{3}{2\gamma(t)}$. Under this condition,
\[ \ddot{\gamma}(t)\geq  \frac{2}{9}\gamma(t)^3\geq 2\hat{\beta}^2\gamma(t)^3,    \quad \forall \hat{\beta}\in(0,\frac{1}{3}),\ t\geq t_0 \] 
 and $\delta(t)=\frac{3}{2\gamma(t)}=\frac{1}{\beta_0 \gamma(t)}$ with $\beta_0=\frac{2}{3}\in[2\hat{\beta},1-\hat{\beta})$ for any $\hat{\beta}\in(0,\frac{1}{3})$. 
Let us mention that the condition 
\begin{equation}\label{eq:inqe_ddot_gamma}
\ddot{\gamma}(t)\geq 2\beta^2 \gamma(t)^3\quad\text{ for some  }\quad\beta\in(0,\frac{1}{3}]
\end{equation}
 has been used  in \cite{AttouchCCR2018} for the asymptotic analysis of $(IGS)_{\gamma}$ associated with the unconstrained optimization problem \eqref{eq:min_fun}. As pointed out in \cite{AttouchCCR2018}, the value $\beta=\frac{1}{3}$ is crucial and it corresponds to $\alpha=3$ in the case $\gamma(t)=\frac{\alpha}{t}$. To the best of our knowledge, this is the first time that this condition is applied to the study of  primal-dual dynamical systems for constrained optimization  problems. 
\end{remark}

\begin{remark}
	The existence and uniqueness of a global solution for $IGS_{\gamma}$ associated with the unconstrained optimization problem \eqref{eq:ques} has been established  in  \cite[Proposition 3.2]{AttouchC2017}). The nonincreasing property of the energy function $W(t):=\frac{1}{2}\|\dot{x}(t)\|^2+\Phi(x(t))$ on $[t_0, T )$ plays a crucial role in the proof of  \cite[Proposition 3.2]{AttouchC2017}).   As a comparison,  in Theorem \ref{th:th_exist_gen}  we use the nonincreasing property of the energy function $\mathcal{E}_{\theta,\eta}^{\beta}(t)$  to prove the existence and uniqueness of a global solution for the dynamic \eqref{dy:dy_unpertu}.
\end{remark}


With Theorem \ref{th:th_exist_gen} in hands, we start to discuss the asymptotic behavior of  the dynamic \eqref{dy:dy_unpertu}. The following condition on the damp function $\gamma(t)$ is a common assumption for convergence analysis:
\begin{equation}\label{eq:int_gamma}
{\int_{t_0}^{+\infty}\gamma(t) dt}= +\infty.
\end{equation}
Notice $p(t)=e^{\int_{t_0}^t\gamma(s) ds}\to +\infty$ as $t\to+\infty$  when $\gamma(t)$ satisfies \eqref{eq:int_gamma}.

\begin{theorem}\label{th:rate_gen}
	Let $\gamma:[t_0,+\infty)\to(0,+\infty)$ be a nonincreasing  and twice continuously differentiable function satisfying   \eqref{eq:inqe_ddot_gamma} and  \eqref{eq:int_gamma},
and  $\delta(t)=\frac{1}{\beta_0\gamma(t)}$ with $\beta_0\in[2\beta,1-\beta]$. Suppose that $(x(t),y(t),\lambda(t))$ is a global solution of the dynamic \eqref{dy:dy_unpertu} and  $(x^*,y^*,\lambda^*)\in \Omega$.  Then, the following conclusions hold:
\begin{itemize}
		\item [($a$)]	$\mathcal{L}(x(t),y(t),\lambda^*)-\mathcal{L}(x^*,y^*,\lambda^* )= \mathcal{O}(p(t)^{-2\beta}).$
		\item[($b$)] $ \|Ax(t)+By(t)-b\|= \mathcal{O}(p(t)^{-\beta}).	$
		\item [($c$)] $\int^{+\infty}_{t_0} p(t)^{2\beta}\gamma(t)\|Ax(t)+By(t)-b\|^2 dt <+\infty. $
\end{itemize}
Moreover, we have the following results:
\begin{itemize}
\item[Case I]: $\beta<\frac{1}{3}$ and $\beta_0\in(2\beta,1-\beta)$.  Then
\begin{itemize}
	 \item [($d$)]  $\int^{+\infty}_{t_0}p(t)^{2\beta}\gamma(t)(\mathcal{L}(x(t),y(t),\lambda^*)-\mathcal{L}(x^*,y^*,\lambda^*))dt <+\infty.$
	\item [($e$)]$\int^{+\infty}_{t_0}p(t)^{2\beta}\gamma(t)(\|\dot{x}(t)\|^2+\|\dot{y}(t)\|^2+\|\dot{\lambda}(t)\|^2) dt <+\infty. $
	\item [($f$)] $\|\dot{x}(t)\|+ \|\dot{y}(t)\|+ \|\dot{\lambda}(t)\| =  \mathcal{O}(p(t)^{-\beta})).$

\end{itemize}
 \item[Case II]: $\beta=\frac{1}{3}$ and  $\beta_0=\frac{2}{3}$. Then for any $ \tau\in(0,\frac{1}{3})$ we have
\begin{itemize}
	 \item [($d'$)]  $\int^{+\infty}_{t_0}p(t)^{2\tau}\gamma(t)(\mathcal{L}(x(t),y(t),\lambda^*)-\mathcal{L}(x^*,y^*,\lambda^*))dt <+\infty.$
	\item [($e'$)]$\int^{+\infty}_{t_0}p(t)^{2\tau}\gamma(t)(\|\dot{x}(t)\|^2+\|\dot{y}(t)\|^2+\|\dot{\lambda}(t)\|^2) dt <+\infty. $	 
	\item [($f'$)] $ \|\dot{x}(t)\|+ \|\dot{y}(t)\|+ \|\dot{\lambda}(t)\| =  \mathcal{O}(p(t)^{-\tau}).$
\end{itemize}
\end{itemize}
\end{theorem}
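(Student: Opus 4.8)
The plan is to exploit the Lyapunov analysis already carried out for Theorem \ref{th:th_exist_gen}. Under the present hypotheses on $\gamma$, with $\delta(t)=\frac{1}{\beta_0\gamma(t)}$ and $\theta,\eta$ chosen as in \eqref{eq:theta_eta_gen} (the computations in the proof of Theorem \ref{th:th_exist_gen}(a) remaining valid also for the boundary values $\beta_0\in\{2\beta,1-\beta\}$ and for $\beta=\frac13$, $\beta_0=\frac23$, cf.\ the remark after that theorem), the energy $\mathcal{E}_{\theta,\eta}^{\beta}(t)$ is nonincreasing on $[t_0,+\infty)$ and the dissipation estimate \eqref{eq:energy_der_gen} holds, in which the three coefficients $\beta_0-2\beta$, $\tfrac{\beta_0}{2}$ and $1-\beta-\beta_0$ are all nonnegative since $\beta_0\in[2\beta,1-\beta]$. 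From $\mathcal{E}_{\theta,\eta}^{\beta}(t)\le\mathcal{E}_{\theta,\eta}^{\beta}(t_0)$ applied to the term $\mathcal{E}_0(t)$ we obtain ($a$); combining the convexity of $f$ and $g$ with the KKT relations \eqref{eq:saddle_point} and $Ax^*+By^*=b$ gives $\mathcal{L}(x,y,\lambda^*)-\mathcal{L}(x^*,y^*,\lambda^*)\ge\frac12\|Ax+By-b\|^2$, so ($b$) follows from ($a$); and integrating \eqref{eq:energy_der_gen} over $[t_0,+\infty)$ while using $\mathcal{E}_{\theta,\eta}^{\beta}\ge0$ yields ($c$), together with ($d$) and ($e$) in Case I (where $\beta_0-2\beta>0$ and $1-\beta-\beta_0>0$).

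For ($f$) I would go back to the structure of the energy. From $\mathcal{E}_1(t)\le\mathcal{E}_{\theta,\eta}^{\beta}(t_0)$ and the lower bound $\eta(t)\ge\beta_0(1-\beta-\beta_0)p(t)^{2\beta}\gamma(t)^2$ of \eqref{eq:eta_gen}, which is positive in Case I, one gets $\sup_{t\ge t_0}p(t)^{\beta}\gamma(t)\|x(t)-x^*\|<+\infty$; then, from $p(t)^{\beta}\|\dot x(t)\|\le\|\theta(t)(x(t)-x^*)+p(t)^{\beta}\dot x(t)\|+\theta(t)\|x(t)-x^*\|$ with $\theta(t)=\beta_0 p(t)^{\beta}\gamma(t)$, one obtains $\sup_{t\ge t_0}p(t)^{\beta}\|\dot x(t)\|<+\infty$; the same argument for $\dot y$ and $\dot\lambda$ gives ($f$).

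Case II is the part that needs a fresh idea, since $\beta=\frac13$, $\beta_0=\frac23$ make $\beta_0-2\beta=0$ and $1-\beta-\beta_0=0$, so \eqref{eq:energy_der_gen} then controls only the feasibility term. For a fixed $\tau\in(0,\frac13)$, I would rerun the whole Lyapunov computation of Section~2 with the exponent $\tau$ in place of $\beta$ while keeping $\beta_0=\frac23$: this is legitimate because the identity \eqref{eq:cond_gen_2} reads $p(t)^{2\tau}=\theta_\tau(t)p(t)^{\tau}\delta(t)$ and hence forces $\delta(t)=\frac{1}{\beta_0\gamma(t)}$ independently of the exponent, which matches the prescribed $\delta(t)=\frac{3}{2\gamma(t)}$. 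Since $\ddot\gamma(t)\ge\frac29\gamma(t)^3\ge2\tau^2\gamma(t)^3$ and, by Lemma \ref{le:A1}, $\dot\gamma(t)\le-\tau\gamma(t)^2$, the analogues of \eqref{eq:cond_gen_1}--\eqref{eq:cond_gen_3} and of the estimate $\eta_\tau(t)\ge\beta_0(1-\tau-\beta_0)p(t)^{2\tau}\gamma(t)^2>0$ still hold, so $\mathcal{E}_{\theta_\tau,\eta_\tau}^{\tau}(t)$ is nonincreasing and satisfies the dissipation estimate with exponent $\tau$ and coefficients $\beta_0-2\tau=\frac23-2\tau$, $\tfrac{\beta_0}{2}$ and $1-\tau-\beta_0=\frac13-\tau$, now all strictly positive. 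Integrating this estimate gives ($d'$) and ($e'$), and the argument used for ($f$), with $\tau$ in place of $\beta$, gives ($f'$). I expect the main obstacle to be exactly this observation: the Lyapunov construction couples $\delta$ to the exponent only through $\beta_0$, so at the critical exponent $\beta=\frac13$ one may trade a slightly smaller exponent $\tau$ for the strict positivity of the dissipation coefficients without changing $\delta(t)$.
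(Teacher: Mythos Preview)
Your proposal is correct and follows essentially the same route as the paper: use the energy $\mathcal{E}_{\theta,\eta}^{\beta}$ and the dissipation inequality \eqref{eq:energy_der_gen} to get ($a$)--($c$) and, in Case I, ($d$)--($f$); then handle Case II by rerunning the same Lyapunov argument with a smaller exponent $\tau\in(0,\tfrac13)$ while keeping $\beta_0=\tfrac23$, since $\ddot\gamma\ge\tfrac29\gamma^3\ge2\tau^2\gamma^3$ and $\beta_0\in(2\tau,1-\tau)$. Your explicit observation that the constraint \eqref{eq:cond_gen_2} pins down $\delta(t)=\tfrac{1}{\beta_0\gamma(t)}$ independently of the exponent is exactly the mechanism the paper uses (though it states it more tersely as ``($d'$), ($e'$), ($f'$) follow directly from ($d$), ($e$), ($f$)'').
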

 
 \begin{proof}
Take $\theta(t)$ and $\eta(t)$ as in \eqref{eq:theta_eta_gen}.  Consider the energy function $\mathcal{E}_{\theta,\eta}^{\beta}:[t_0,+\infty)$ $\to[0,+\infty)$ defined by \eqref{eq:energy_fun_gen}. From \eqref{eq:energy_der_gen}, we have  
\begin{equation}\label{eq:th_rate_energy_decrease}
	\mathcal{E}_{\theta,\eta}^{\beta}(t)\leq  \mathcal{E}_{\theta,\eta}^{\beta}(t_0),\quad \forall t\geq t_0
\end{equation}
and
 \begin{eqnarray}\label{eq:th_rate_der_energy}
	&&\dot{\mathcal{E}}_{\theta,\eta}^{\beta}(t)+  (\beta_0-2\beta)p(t)^{2\beta}
	\gamma(t)(\mathcal{L}(x(t),y(t),\lambda^*)-\mathcal{L}(x^*,y^*,\lambda^*))\nonumber \\
	&&\quad+\frac{\beta_0}{2} p(t)^{2\beta}\gamma(t)\|Ax(t)+By(t)-b\|^2
\\
	&&\quad +(1-\beta-\beta_0)   p(t)^{2\beta}\gamma(t)(\|\dot{x}(t)\|^2+\|\dot{y}(t)\|^2+\|\dot{\lambda}(t)\|^2)   \leq 0,\quad \forall t\geq t_0.\nonumber	
\end{eqnarray}
As a consequence of \eqref{eq:th_rate_energy_decrease}, $\mathcal{E}_{\theta,\eta}^{\beta}(\cdot)$ is bounded on $[t_0,+\infty)$.  This together with  \eqref{eq:energy_fun_gen} implies
\begin{equation}\label{eq:rate_L_gen}
	\mathcal{L}(x(t),y(t),\lambda^*)-\mathcal{L}(x^*,y^*,\lambda^*) =\mathcal{O}(p(t)^{-2\beta}).	
\end{equation}
Since $f$ and $g$ are convex, it follows from from \eqref{eq:saddle_point} that
\begin{eqnarray}\label{eq:th_gen_constain}
	 && \mathcal{L}(x(t),y(t),\lambda^*)-\mathcal{L}(x^*,y^*,\lambda^*)\nonumber\\
	 && \quad = f(x(t))-f(x^*)+g(y(t))-g(y^*)+\langle \lambda^*,Ax(t)+By(t)-b\rangle\nonumber \\
	 &&\qquad+\frac{1}{2}\|Ax(t)+By(t)-b\|^2\\
	 	 && \quad = f(x(t))-f(x^*)-\langle -A^T\lambda^*, x(t)-x^*\rangle\nonumber \\
	 	 && \qquad+g(y(t))-g(y^*)-\langle -B^T\lambda^*, y(t)-y^*\rangle+\frac{1}{2}\|Ax(t)+By(t)-b\|^2\nonumber\\
	 	&&\quad \geq \frac{1}{2}\|Ax(t)+By(t)-b\|^2.\nonumber
\end{eqnarray}
This together with \eqref{eq:rate_L_gen} yields
\[ \|Ax(t)+By(t)-b\|= \mathcal{O}(p(t)^{-\beta}).\]
Since $\beta\in(0,\frac{1}{3}]$ and $\beta_0\in[2\beta,1-\beta]$, again from \eqref{eq:th_rate_der_energy} we have 
\[\int^{+\infty}_{t_0}  p(t)^{2\beta}\gamma(t)\|Ax(t)+By(t)-b\|^2 dt <+\infty, \]
\begin{eqnarray} \label{eq:th_gen_integ_1}
	 (\beta_0-2\beta)\int^{+\infty}_{t_0}p(t)^{2\beta}\gamma(t)(\mathcal{L}(x(t),y(t),\lambda^*)-\mathcal{L}(x^*,y^*,\lambda^*))dt <+\infty,
\end{eqnarray}
and
\begin{eqnarray}\label{eq:th_gen_integ_2}
(1-\beta-\beta_0)\int^{+\infty}_{t_0}  p(t)^{2\beta}\gamma(t)(\|\dot{x}(t)\|^2+\|\dot{y}(t)\|^2+\|\dot{\lambda}(t)\|^2) dt <+\infty.
\end{eqnarray}
Thus we have shown $(a)-(c)$.

Next we prove $(d)$,$(e)$ and $(f)$ in the case  $\beta<\frac{1}{3}$ and $\beta_0\in(2\beta,1-\beta)$. Clearly, $\beta_0-2\beta>0$ and $1-\beta-\beta_0>0$.  So $(d)$ and $(e)$ follow directly from \eqref{eq:th_gen_integ_1} and \eqref{eq:th_gen_integ_2}, respectively.

As shown in the proof of $(b)$ of Theorem \ref{th:th_exist_gen}, we have 
\[ \sup_{t\geq t_0}p(t)^{\beta}\gamma(t)\|x(t)-x^*\|<+\infty \]
and
\[p(t)^{\beta}\|\dot{x}(t)\|\leq \sqrt{2\mathcal{E}_{\theta,\eta}^{\beta}(t_0)}+\beta_0 p(t)^{\beta}\gamma(t)\|x(t)-x^*\|,\quad  \forall t\geq t_0.\]
Then
\[\sup_{t\geq t_0} p(t)^{\beta}\|\dot{x}(t)\|\leq \sqrt{2\mathcal{E}_{\theta,\eta}^{\beta}(t_0)}+\beta_0 \sup_{t\geq t_0}p(t)^{\beta}\gamma(t)\|x(t)-x^*\| < +\infty,\]
this implies 
 \[ \|\dot{x}(t)\|= \mathcal{O}(p(t)^{-\beta}).\]
By similar arguments, we have
 \[ \|\dot{y}(t)\|= \mathcal{O}(p(t)^{-\beta})\quad\text{and}\quad  \|\dot{\lambda}(t)\|= \mathcal{O}(p(t)^{-\beta}).\]
This proves $(f)$.

In the case $\beta =\frac{1}{3}$ and $\beta_0=\frac{2}{3}$. For any $\tau\in(0,\frac{1}{3})$, we have $\ddot{\gamma}(t)\geq 2\tau^2\gamma(t)^3$ and $\beta_0\in (2\tau,1-\tau)$. So $(d'),(e')$ and $(f')$ follow directly from $(d)$,$(e)$ and $(f)$, respectively.
\end{proof}

\begin{remark}
It is assumed in Theorem \ref{th:rate_gen} that  $\beta_0\in[2\beta,1-\beta]$. In fact, for any $\beta_0\in (0,1)$,  we can prove  convergence rates as in Theorem \ref{th:rate_gen} by substituting $\bar{\beta}$ for $\beta$, where $\bar{\beta} = \min\lbrace \beta, \frac{\beta_0}{2},1-\beta_0\rbrace$. It is easy to verify that
\[ \ddot{\gamma}(t)\geq 2\bar{\beta}^2 \gamma(t)^3,\quad \bar{\beta}\in(0,\frac{1}{3}],\quad \text{ and }\quad\beta_0\in[2\bar{\beta}, 1-\bar{\beta}].\]
\end{remark}
\begin{remark}
Theorem \ref{th:rate_gen} can be viewed as  analogs of the results  in \cite[Theorem 2.1, Proposition 3, Proposition 4]{AttouchCCR2018}, where  the convergence rate analysis of $(IGS_{\gamma})$  associated with the unconstrained optimization problem \eqref{eq:min_fun}  were derived. In \cite[Theorem 2.1]{AttouchCCR2018}, they assumed that $x(t)$ is bounded on $[t_0,+\infty)$ to get $\|\dot{x}(t)\| =  \mathcal{O}(p(t)^{-\beta})$. Theorem \ref{th:rate_gen} shows that the  boundedness assumption is redundant both in the $IGS_\gamma$ and in our primal-dual dynamical system.
\end{remark}


In the rest of this section, we apply the results of Theorem \ref{th:rate_gen}  to two special damping functions:   $\gamma(t) = \frac{\alpha}{t}$ with $\alpha >0$ and  $\gamma(t)=\frac{1}{t (\ln t)^{r}}$ with $r\in[0,1]$.  

\textbf{Case  $\gamma(t) = \frac{\alpha}{t}$ with $\alpha >0$}. In this case, $\ddot{\gamma}(t) = \frac{2\alpha}{t^3}$ and
\[\ddot{\gamma}(t)\geq 2\beta^2\gamma(t)^3 \Longleftrightarrow \alpha\beta\leq 1. \]
Assumption on $\gamma(t)$ in Theorem \ref{th:rate_gen} is satisfied if we take 
\[ 0<\beta \leq \min\left\lbrace\frac{1}{3}, \frac{1}{\alpha}\right\rbrace. \]

\begin{corollary}\label{cor:cor_alpha_t}
Suppose that  $\gamma(t)=\frac{\alpha}{t}$ with  $\alpha>0$ and  $\delta(t) = \frac{t}{\beta_0\alpha}$ with  $\beta_0>0$. Let  $(x^*,y^*,\lambda^*)\in \Omega$ and  $(x(t),y(t),\lambda(t))$ be a global solution of the dynamic \eqref{dy:dy_unpertu}. 
Then we have the following results:
\begin{itemize}
 \item[i)] If $\alpha\leq 3$ and $\beta_0=\frac{2}{3}$, then
 \begin{itemize}
	 \item [(a)]	$\mathcal{L}(x(t),y(t),\lambda^*)-\mathcal{L}(x^*,y^*,\lambda^* )= \mathcal{O}(t^{-\frac{2\alpha}{3}})$.
		\item[(b)] $ \|Ax(t)+By(t)-b\|= \mathcal{O}(t^{-\frac{\alpha}{3}}).	$
		\item [(c)] $\int^{+\infty}_{t_0} t^{\frac{2\alpha}{3}-1}\|Ax(t)+By(t)-b\|^2 dt <+\infty. $ 
		\item [(d)]  $\int^{+\infty}_{t_0}t^{m}(\mathcal{L}(x(t),y(t),\lambda^*)-\mathcal{L}(x^*,y^*,\lambda^*))dt <+\infty, \quad\forall m\in(-1,\frac{2\alpha}{3}-1).$ 
		\item [(e)] $\int^{+\infty}_{t_0}t^m(\|\dot{x}(t)\|^2+\|\dot{y}(t)\|^2+\|\dot{\lambda}(t)\|^2) dt <+\infty,\quad\forall m\in(-1,\frac{2\alpha}{3}-1)$ .
		\item  [(f)] $\|\dot{x}(t)\|+ \|\dot{y}(t)\|+ \|\dot{\lambda}(t)\| =  \mathcal{O}(t^{-m}), \quad \forall m\in(0,\frac{\alpha}{3})$.
		\end{itemize}
	 Moreover if $\alpha<3$, then
	 \begin{itemize}
		\item[(g)]$\|\dot{x}(t)\|+ \|\dot{y}(t)\|+ \|\dot{\lambda}(t)\| =  \mathcal{O}(t^{-\frac{\alpha}{3}})$.
		\end{itemize}
\item[ii)] If $\alpha > 3$ and  $\beta_0\in (\frac{2}{\alpha},1-\frac{1}{\alpha})$, then
	\begin{itemize}
		\item [(a')]	$\mathcal{L}(x(t),y(t),\lambda^*)-\mathcal{L}(x^*,y^*,\lambda^* )= \mathcal{O}(t^{-2}).$
		\item[(b')] $ \|Ax(t)+By(t)-b\|= \mathcal{O}(t^{-1}).	$
		\item [(c')] $\int^{+\infty}_{t_0}t\|Ax(t)+By(t)-b\|^2 dt <+\infty. $
		\item [(d')]  $\int^{+\infty}_{t_0}t (\mathcal{L}(x(t),y(t),\lambda^*)-\mathcal{L}(x^*,y^*,\lambda^*))dt <+\infty.$
		\item [(e')] $\int^{+\infty}_{t_0}t(\|\dot{x}(t)\|^2+\|\dot{y}(t)\|^2+\|\dot{\lambda}(t)\|^2) dt <+\infty. $
		\item[(f')]$\|\dot{x}(t)\|+ \|\dot{y}(t)\|+ \|\dot{\lambda}(t)\| =  \mathcal{O}(t^{-1})$.
\end{itemize}
\end{itemize}
\end{corollary}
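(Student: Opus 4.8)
The plan is to specialize Theorem~\ref{th:rate_gen} to the damping $\gamma(t)=\frac{\alpha}{t}$, $\alpha>0$. I would begin by recording the elementary facts attached to this choice. From \eqref{eq:pt} one gets $p(t)=(t/t_0)^{\alpha}$, so that $p(t)^{\beta}=(t/t_0)^{\alpha\beta}$ is a positive constant times $t^{\alpha\beta}$, $p(t)^{2\beta}$ a positive constant times $t^{2\alpha\beta}$, and $p(t)^{2\beta}\gamma(t)$ a positive constant times $t^{2\alpha\beta-1}$; a direct computation gives $\ddot{\gamma}(t)=\frac{2\alpha}{t^{3}}$, hence \eqref{eq:inqe_ddot_gamma}, i.e. $\ddot{\gamma}(t)\geq 2\beta^{2}\gamma(t)^{3}$, is equivalent to $\alpha\beta\leq 1$; finally $\gamma$ is positive, nonincreasing and $C^{\infty}$, $\int_{t_0}^{+\infty}\gamma(t)\,dt=+\infty$, and $\delta(t)=\frac{t}{\beta_0\alpha}=\frac{1}{\beta_0\gamma(t)}$ is precisely the form required by Theorem~\ref{th:rate_gen}. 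Since the $\mathcal{O}$-rates and the weights in the integrals of Theorem~\ref{th:rate_gen} are all expressed through $p(t)$, each of them is converted into a power of $t$ by inserting these identities.

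For part (i), with $\alpha\leq 3$ and $\beta_0=\frac{2}{3}$, I would take $\beta=\frac{1}{3}$: then $\alpha\beta=\frac{\alpha}{3}\leq 1$, so \eqref{eq:inqe_ddot_gamma} holds, and $\beta_0=\frac{2}{3}$ is the unique element of $[2\beta,1-\beta]$, placing us in Case~II of Theorem~\ref{th:rate_gen}. Conclusions (a)--(c) are then (a)--(c) of Theorem~\ref{th:rate_gen} after inserting $p(t)^{2\beta}=(t/t_0)^{2\alpha/3}$ and $p(t)^{2\beta}\gamma(t)=\frac{\alpha}{t_0^{2\alpha/3}}\,t^{2\alpha/3-1}$. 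Conclusions (d)--(f) are obtained from $(d')$--$(f')$ by letting the free parameter $\tau$ run over $(0,\frac{1}{3})$: since $p(t)^{2\tau}\gamma(t)$ is a positive constant times $t^{2\alpha\tau-1}$ with $2\alpha\tau-1$ describing $(-1,\frac{2\alpha}{3}-1)$, and $p(t)^{-\tau}$ is a positive constant times $t^{-\alpha\tau}$ with $\alpha\tau$ describing $(0,\frac{\alpha}{3})$, the ranges of admissible exponents $m$ in (d), (e), (f) come out exactly as stated.

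For part (ii), with $\alpha>3$ and $\beta_0\in(\frac{2}{\alpha},1-\frac{1}{\alpha})$, I would instead choose $\beta=\frac{1}{\alpha}<\frac{1}{3}$, so that $\alpha\beta=1$ (equality in \eqref{eq:inqe_ddot_gamma}), $[2\beta,1-\beta]=[\frac{2}{\alpha},1-\frac{1}{\alpha}]$, and $\beta_0$ lies in the \emph{open} interval $(2\beta,1-\beta)$; this puts us in Case~I of Theorem~\ref{th:rate_gen}. Then $(a')$--$(f')$ follow from (a)--(f) of that theorem using $p(t)^{2\beta}=(t/t_0)^{2}$ and $p(t)^{2\beta}\gamma(t)=\frac{\alpha}{t_0^{2}}\,t$.

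The one conclusion that is not mere bookkeeping is (g) (the borderline rate $\|\dot{x}(t)\|+\|\dot{y}(t)\|+\|\dot{\lambda}(t)\|=\mathcal{O}(t^{-\alpha/3})$ for $\alpha<3$), because Case~II of Theorem~\ref{th:rate_gen} only produces the rate $p(t)^{-\tau}$ for $\tau<\frac{1}{3}$, not at the endpoint $\tau=\frac{1}{3}$. Here I would go back to the energy estimate itself. With $\beta=\frac{1}{3}$, $\beta_0=\frac{2}{3}$ and $\gamma(t)=\frac{\alpha}{t}$, computing $\eta(t)$ from \eqref{eq:theta_eta_gen} gives $\eta(t)=\frac{2\alpha(3-\alpha)}{9}\,(t/t_0)^{2\alpha/3}\,t^{-2}$, so for $\alpha<3$ one has the strict bound $\eta(t)\geq c\,p(t)^{2\beta}\gamma(t)^{2}$ with $c=\frac{2(3-\alpha)}{9\alpha}>0$. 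Exactly as in the proof of part (b) of Theorem~\ref{th:th_exist_gen}, boundedness of $\mathcal{E}_{\theta,\eta}^{\beta}$ together with this bound yields $\sup_{t\geq t_0}p(t)^{\beta}\gamma(t)\|x(t)-x^{*}\|<+\infty$, whence $p(t)^{\beta}\|\dot{x}(t)\|\leq\sqrt{2\mathcal{E}_{\theta,\eta}^{\beta}(t_0)}+\beta_0\,p(t)^{\beta}\gamma(t)\|x(t)-x^{*}\|$ is bounded, i.e. $\|\dot{x}(t)\|=\mathcal{O}(p(t)^{-\beta})=\mathcal{O}(t^{-\alpha/3})$, and likewise for $\dot{y}$ and $\dot{\lambda}$. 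I expect this recovery of the endpoint rate from the strictness $\alpha<3$, via the positivity margin of $\eta(t)$, to be the only genuinely delicate point; everything else is matching exponents.
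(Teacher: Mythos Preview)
Your proposal is correct and follows essentially the same approach as the paper: choose $\beta=\min\{\tfrac{1}{3},\tfrac{1}{\alpha}\}$, convert the $p(t)$-based rates of Theorem~\ref{th:rate_gen} into powers of $t$ via $p(t)=(t/t_0)^{\alpha}$, and for (g) go back to the energy function with $\beta=\tfrac{1}{3}$, $\beta_0=\tfrac{2}{3}$, exploiting that $\eta(t)=\frac{2\alpha(3-\alpha)}{9}\,(t/t_0)^{2\alpha/3}t^{-2}$ is strictly positive when $\alpha<3$. The paper's proof of (g) writes out $\theta(t)$, $\eta(t)$ and the resulting bounds with explicit constants rather than invoking the argument of Theorem~\ref{th:th_exist_gen}(b), but the reasoning is identical.
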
 
\begin{proof}
Since $\gamma(t) =\frac{\alpha}{t}$, by computation we have
\begin{equation}\label{col:gen_pt}
p(t) = e^{\int^t_{t_0}\gamma(s)ds}=\left(\frac{t}{t_0}\right)^{\alpha}.
\end{equation}
Take $\beta = \min\left\lbrace\frac{1}{3}, \frac{1}{\alpha}\right\rbrace $. It is easy to verify that all the assumptions in Theorem  \ref{th:rate_gen} are satisfied. So $(a)-(f)$ and $(a')-(f')$ follow directly from Theorem \ref{th:rate_gen}. 

Now we prove $(g)$. Notice that  $\alpha< 3$,  $\beta=\frac{1}{3}$, and   $\beta_0=\frac{2}{3}$. Consider the functions $\theta(t)$ and $\eta(t)$  defined by \eqref{eq:theta_eta_gen}.
By computations we get
\begin{equation}\label{col:theta_eta}
 \theta(t) = \frac{2\alpha}{3t_0^{\frac{\alpha}{3}}}t^{\frac{\alpha}{3}-1}\quad \text{and}\quad \eta(t) =\frac{2\alpha}{3t_0^{\frac{2\alpha}{3}}}(1-\frac{\alpha}{3})t^{\frac{2\alpha}{3}-2}.
\end{equation}
Then,
\[  \mathcal{E}_{\theta,\eta}^{\beta}(t)\leq  \mathcal{E}_{\theta,\eta}^{\beta}(t_0),\quad\forall t\geq t_0,  \]
where $\mathcal{E}_{\theta,\eta}^{\beta}(t)$ is  the energy function  defined by \eqref{eq:energy_fun_gen}. As a consequence, we have
\begin{equation*}
	\frac{1}{2}\|\theta(t)(x(t)-x^*)+p(t)^{\beta}\dot{x}(t)\|^2+\frac{\eta(t)}{2}\|x(t)-x^*\|^2\leq   \mathcal{E}_{\theta,\eta}^{\beta}(t_0),\quad\forall t\geq t_0.
\end{equation*} 
This implies that for any $t\geq t_0$
\begin{equation}\label{col:gen_eq_x}
\eta(t)\|x(t)-x^*\|^2 \leq   2\mathcal{E}_{\theta,\eta}^{\beta}(t_0)
\end{equation}
and
\begin{equation}\label{col:gen_eq_dotx}
p(t)^{\beta}\|\dot{x}(t)\|-\theta(t)\|x(t)-x^*\| \leq\|\theta(t)(x(t)-x^*)+p(t)^{\beta}\dot{x}(t)\| \leq \sqrt{ 2\mathcal{E}_{\theta,\eta}^{\beta}(t_0)}.
\end{equation}
It follows from \eqref{col:gen_pt}-\eqref{col:gen_eq_dotx} that for any $t\geq t_0$,
\[ t^{\frac{\alpha}{3}-1}\|x(t)-x^*\|\leq \frac{3t_0^{\frac{\alpha}{3}}}{\sqrt{\alpha(3-\alpha)}}\sqrt{\mathcal{E}_{\theta,\eta}^{\beta}(t_0)} \]
and
\[ \left(\frac{t}{t_0}\right)^{\alpha\beta}\|\dot{x}(t)\|   \leq  \sqrt{ 2\mathcal{E}_{\theta,\eta}^{\beta}(t_0)} +  \frac{2\alpha}{3t_0^{\frac{\alpha}{3}}} t^{\frac{\alpha}{3}-1} \|x(t)-x^*\|  \leq (\sqrt{2}+2\sqrt{\frac{\alpha}{3-\alpha}})\sqrt{\mathcal{E}_{\theta,\eta}^{\beta}(t_0)}.
\]
This means
\[\|\dot{x}(t)\|= \mathcal{O}(t^{-\frac{\alpha}{3}}) . \]
By similar arguments, we get 
\[\|\dot{y}(t)\|= \mathcal{O}(t^{-\frac{\alpha}{3}}) \quad\text{and}\quad \|\dot{\lambda}(t)\|= \mathcal{O}(t^{-\frac{\alpha}{3}}).\]
This  proves  $(g)$.
\end{proof}

\begin{remark}
Corollary \ref{cor:cor_alpha_t} improves  \cite[Theorem 3.1 and Theorem 3.2]{ZengJ2019} where convergence rates of a second-order dynamical system based on the primal-dual framework for the problem \eqref{eq:ques} with $g(x)\equiv 0$ and $B = 0$ were established. 
\end{remark}

 \textbf{Case $\gamma(t)=\frac{1}{t (\ln t)^{r}}$ with $r\in[0,1]$}. In this case,
\[\ddot{\gamma}{(t)}=\frac{2(\ln t)^2+3r\ln t+r(r+1)}{t^3(\ln t)^{r+2}}.\]

It is easy to verify that \eqref{eq:inqe_ddot_gamma} holds for all  $\beta \in (0, \frac{1}{3}]$ and $t_0\geq e$. As a consequence of Theorem \ref{th:rate_gen}, we have

\begin{corollary}\label{cor:cor_tlnt}
	Suppose that $\gamma(t)=\frac{1}{t (\ln t)^{r}}$ with $r\in[0,1]$, $\delta(t) = \frac{3t (\ln t)^{r}}{2}$  and $t_0\geq e$. 	Let $(x(t),y(t),\lambda(t))$ be a global solution of the  dynamic \eqref{dy:dy_unpertu} and $(x^*,y^*,\lambda^*)\in \Omega$. Then we have the following results:
	\begin{itemize}
		\item [(a)]	$\mathcal{L}(x(t),y(t),\lambda^*)-\mathcal{L}(x^*,y^*,\lambda^* )= \mathcal{O}(-p(t)^{\frac{2}{3}})$.
		\item[(b)] $ \|Ax(t)+By(t)-b\|= \mathcal{O}(-p(t)^{\frac{1}{3}}).$
		\item [(c)] $\int^{+\infty}_{t_0}\frac{p(t)^{\frac{2}{3}}}{t (\ln t)^{r}}\|Ax(t)+By(t)-b\|^2 dt <+\infty.$
		\item [(d)]  $\int^{+\infty}_{t_0}\frac{p(t)^m}{t (\ln t)^{r}}(\mathcal{L}(x(t),y(t),\lambda^*)-\mathcal{L}(x^*,y^*,\lambda^*))dt <+\infty, \quad \forall m\in(0,\frac{2}{3}).$	
		\item [(e)] $\int^{+\infty}_{t_0}\frac{p(t)^m}{t(\ln t)^{r}}(\|\dot{x}(t)\|^2+\|\dot{y}(t)\|^2+\|\dot{\lambda}(t)\|^2) dt <+\infty,  \quad \forall m\in(0,\frac{2}{3}).$
		\item [(f)] $ \|\dot{x}(t)\|+ \|\dot{y}(t)\|+ \|\dot{\lambda}(t)\| =  \mathcal{O}(p(t)^{-m}),\quad \forall m\in(0,\frac{1}{3}).$
\end{itemize}
Here $p(t) = e^{\int^{\ln t}_{\ln t_0}\frac{1}{s^r}ds} $.
\end{corollary}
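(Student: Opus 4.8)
The plan is to verify that the data $\gamma(t)=\frac{1}{t(\ln t)^r}$ and $\delta(t)=\frac{3t(\ln t)^r}{2}$ fit the hypotheses of Theorem~\ref{th:rate_gen} in the regime of its Case~II ($\beta=\frac13$, $\beta_0=\frac23$), and then to read off the six conclusions after the change of variable $s=\ln\sigma$ in the definition \eqref{eq:pt} of $p$. Since the statement already postulates a global solution, no existence argument is needed; global existence here is in any case covered by Theorem~\ref{th:th_exist_gen} and the remark following it, as $\delta(t)=\frac{3}{2\gamma(t)}$.

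First I would check the structural requirements on $\gamma$. For $t_0\ge e$ the function $\gamma(t)=t^{-1}(\ln t)^{-r}$ is positive, and it is nonincreasing and twice continuously differentiable on $[t_0,+\infty)$ since both factors are positive, nonincreasing and smooth there. Assumption \eqref{eq:int_gamma} follows from the substitution $u=\ln t$, which turns $\int_{t_0}^{+\infty}\gamma(t)\,dt$ into $\int_{\ln t_0}^{+\infty}u^{-r}\,du=+\infty$ for every $r\in[0,1]$. The only genuinely computational point is the differential inequality \eqref{eq:inqe_ddot_gamma}: using the formula for $\ddot\gamma(t)$ recorded above and $\gamma(t)^3=t^{-3}(\ln t)^{-3r}$, the inequality $\ddot\gamma(t)\ge 2\beta^2\gamma(t)^3$ is equivalent to
\[ 2(\ln t)^2+3r\ln t+r(r+1)\ \ge\ 2\beta^2(\ln t)^{2-2r},\qquad\forall t\ge t_0. \]
For $t_0\ge e$ we have $\ln t\ge 1$, and for $r\in[0,1]$ we have $(\ln t)^{2-2r}\le(\ln t)^2$; since moreover $2\beta^2\le\frac29<2$ whenever $\beta\le\frac13$, the left-hand side is at least $2(\ln t)^2$, which is at least $2\beta^2(\ln t)^{2-2r}$, and \eqref{eq:inqe_ddot_gamma} holds for every $\beta\in(0,\frac13]$. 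Finally $\delta(t)=\frac{3t(\ln t)^r}{2}=\frac{1}{\beta_0\gamma(t)}$ with $\beta_0=\frac23$, and for $\beta=\frac13$ one has $\beta_0=\frac23\in[2\beta,1-\beta]$; thus all the hypotheses of Theorem~\ref{th:rate_gen} are met and we are exactly in Case~II.

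With this in hand, conclusions (a)--(c) follow at once from conclusions ($a$)--($c$) of Theorem~\ref{th:rate_gen} with $\beta=\frac13$, using $p(t)^{2\beta}\gamma(t)=p(t)^{2/3}/(t(\ln t)^r)$. Conclusions (d)--(f) follow from ($d'$)--($f'$) of Case~II: writing $m=2\tau$ with $\tau\in(0,\frac13)$ gives $m\in(0,\frac23)$ and turns ($d'$),($e'$) into (d),(e), while $m=\tau\in(0,\frac13)$ turns ($f'$) into (f). The closed form $p(t)=e^{\int_{\ln t_0}^{\ln t}s^{-r}\,ds}$ claimed at the end of the statement is, once again, the substitution $s=\ln\sigma$ in $p(t)=e^{\int_{t_0}^{t}\gamma(\sigma)\,d\sigma}$.

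The only mild obstacle is that \eqref{eq:inqe_ddot_gamma} has to be verified uniformly in $t$ rather than merely asymptotically; this is precisely where the standing hypotheses $t_0\ge e$ (hence $\ln t\ge 1$) and $r\le 1$ enter. Everything else is bookkeeping together with one change of variables.
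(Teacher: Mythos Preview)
Your proposal is correct and follows the same approach as the paper: the corollary is obtained as a direct application of Theorem~\ref{th:rate_gen} in its Case~II ($\beta=\tfrac13$, $\beta_0=\tfrac23$), after checking that $\gamma(t)=\frac{1}{t(\ln t)^r}$ satisfies \eqref{eq:inqe_ddot_gamma} and \eqref{eq:int_gamma} on $[t_0,+\infty)$ with $t_0\ge e$. The paper only records the formula for $\ddot\gamma$ and asserts that \eqref{eq:inqe_ddot_gamma} holds; your explicit verification via $(\ln t)^{2-2r}\le(\ln t)^2$ for $\ln t\ge 1$ and $r\in[0,1]$ supplies the missing computation but does not differ in substance.
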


\begin{remark}
Corollary \ref{cor:cor_tlnt} can be viewed as analogs of the results  in \cite[Subsection 4.2]{AttouchCCR2018}, where  the convergence rate analysis of $(IGS_{\gamma})$  associated with the unstrained optimization problem\eqref{eq:min_fun} has been discussed when   $\gamma(t)=\frac{1}{t (\ln t)^{r}}$ with $r\in[0,1]$.
\end{remark}

\section{Asymptotic properties for $\gamma(t)=\frac{\alpha}{t^r}$ }

In this section, we investigate the asymptotic behavior of the  dynamic \eqref{dy:dy_unpertu} with $\gamma(t)=\frac{\alpha}{t^r}$, $r\in(-1,1)$ and $\alpha>0$. Let us mention that the results of this section cannot be obtained as applications of the results in Section 2. In the case $\gamma(t)=\frac{\alpha}{t^r}$ with $r\in(-1,1)$ and $\alpha>0$, the condition  \eqref{eq:inqe_ddot_gamma} required in Section 2 is not satisfied. Indeed, in this case, we have $\ddot{\gamma}(t) = \frac{\alpha r(1+r)}{t^{r+2}}$.  If there exists $\beta\in (0,\frac{1}{3}]$ satisfying  \eqref{eq:inqe_ddot_gamma}, i.e.,  $\ddot{\gamma}(t)\geq 2\beta^2\gamma(t)^3$. Then  we have
\[ 0< 2\beta^2\alpha^2\leq \frac{r(r+1)}{t^{2(1-r)}}\to 0,\quad \text{as } t\to+\infty,\]
a contradiction. 

Next, we first prove the existence and uniqueness of  a global solution of the  dynamic \eqref{dy:dy_unpertu} with $\gamma(t)=\frac{\alpha}{t^r}$.

Throughout this section, we always suppose that  $t_0\geq1 $ and $(x^*,y^*,\lambda^*)\in \Omega$. Then  we have $\mathcal{L}(x(t),y(t),\lambda^*)-\mathcal{L}(x^*,y^*,\lambda^* )\geq 0$ for $t\geq t_0$. We consider the energy function ${E}_{\theta,\eta}^{\rho}:[t_0,+\infty)\to[0,+\infty)$ defined by
\begin{equation}\label{eq:energy_tr}
	E_{\theta,\eta}^{\rho}(t) = {E}_0(t)+{E}_1(t)+{E}_2(t)+{E}_3(t),
\end{equation}
where
\begin{equation}\label{eq:energy_tr_sub}
	\begin{cases}
		{E}_0(t) = t^{2\rho}(\mathcal{L}(x(t),y(t),\lambda^*)-\mathcal{L}(x^*,y^*,\lambda^*)	),\\
		{E}_1(t) = \frac{1}{2}\|\theta(t)(x(t)-x^*)+t^\rho\dot{x}(t)\|^2+\frac{\eta(t)}{2}\|x(t)-x^*\|^2,\\
		{E}_2(t) = \frac{1}{2}\|\theta(t)(y(t)-y^*)+t^\rho\dot{y}(t)\|^2+\frac{\eta(t)}{2}\|y(t)-y^*\|^2,\\
		{E}_3(t) = \frac{1}{2}\|\theta(t)(\lambda(t)-x^*)+t^\rho\dot{\lambda}(t)\|^2+\frac{\eta(t)}{2}\|\lambda(t)-\lambda^*\|^2,
	\end{cases}
\end{equation}
$\theta,\eta:[t_0,+\infty)\to [0,+\infty)$ are two suitable functions, and $\rho>0$.
Multiplying the first equation of \eqref{dy:dy_unpertu} with  $\gamma(t)=\frac{\alpha}{t^r}$ by $t^\rho$, we have
\[t^\rho\ddot{x}(t) = -\alpha t^{\rho-r}\dot{x}(t)- t^\rho(\nabla f(x(t))+A^T(\lambda(t)+\delta(t)\dot{\lambda}(t))+A^T(Ax(t)+By(t)-b)). \]
This yields
\begin{eqnarray*}
	&&\dot{{E}}_1(t) = \langle \theta(t)(x(t)-x^*)+t^\rho\dot{x}(t), \dot{\theta}(t)(x(t)-x^*)+\theta(t)\dot{x}(t)+\rho t^{\rho-1}\dot{x}(t)+t^\rho\ddot{x}(t)\rangle \\
	&&\qquad + \frac{\dot{\eta}(t)}{2}\|x(t)-x^*\|^2+ \eta(t) \langle x(t)-x^*,\dot{x}(t)\rangle\\
	&&\quad = \langle \theta(t)(x(t)-x^*)+t^\rho\dot{x}(t), \dot{\theta}(t)(x(t)-x^*)+(\theta(t)+\rho t^{\rho-1}-\alpha t^{\rho-r})\dot{x}(t)\\
	&&\qquad -t^\rho(\nabla f(x(t))+A^T(\lambda(t)+\delta(t)\dot{\lambda}(t))+A^T(Ax(t)+By(t)-b)) \rangle \\
	&&\qquad + \frac{\dot{\eta}(t)}{2}\|x(t)-x^*\|^2+ \eta(t) \langle x(t)-x^*,\dot{x}(t)\rangle\\
	&&\quad= (\theta(t)\dot{\theta}(t)+\frac{\dot{\eta}(t)}{2})\|x(t)-x^*\|^2+ t^\rho(\theta(t)+\rho t^{\rho-1}-\alpha t^{\rho-r})\|\dot{x}(t)\|^2\\
	&&\qquad+(\theta(t)(\theta(t)+\rho t^{\rho-1}-\alpha t^{\rho-r})+\eta(t)+t^\rho\dot{\theta}(t))\langle x(t)-x^*,\dot{x}(t)\rangle \\
	&&\qquad-\theta(t)\delta(t)t^{\rho}\langle Ax(t)-Ax^*, \dot{\lambda}(t)\rangle
	-\delta(t)t^{2\rho}\langle A\dot{x}(t),\dot{\lambda}(t)\rangle\\
	&& \qquad-\theta(t)t^\rho(\langle x(t)-x^*, \nabla f(x(t))+A^T\lambda(t)+A^T(Ax(t)+By(t)-b)\rangle\\
	&&\qquad- t^{2\rho}\langle \dot{x}(t), \nabla f(x(t))+A^T\lambda(t)+A^T(Ax(t)+By(t)-b)\rangle.
	\end{eqnarray*}
Similarly, we have
\begin{eqnarray*}
	&&\dot{{E}}_2(t)= (\theta(t)\dot{\theta}(t)+\frac{\dot{\eta}(t)}{2})\|y(t)-y^*\|^2+ t^\rho(\theta(t)+\rho t^{\rho-1}-\alpha t^{\rho-r})\|\dot{y}(t)\|^2\\
	&&\qquad+(\theta(t)(\theta(t)+\rho t^{\rho-1}-\alpha t^{\rho-r})+\eta(t)+t^\rho\dot{\theta}(t))\langle y(t)-y^*,\dot{y}(t)\rangle \\
	&&\qquad-\theta(t)\delta(t)t^{\rho}\langle By(t)-By^*, \dot{\lambda}(t)\rangle-\delta(t)t^{2\rho}\langle B\dot{y}(t),\dot{\lambda}(t)\rangle\\
	&&\qquad -\theta(t)t^\rho(\langle y(t)-y^*, \nabla g(y(t))+B^T\lambda(t)+B^T(Ax(t)+By(t)-b)\rangle\\
	&&\qquad- t^{2\rho}\langle \dot{y}(t), \nabla g(y(t))+B^T\lambda(t)+B^T(Ax(t)+By(t)-b)\rangle
	\end{eqnarray*}
and 
\begin{eqnarray*}
	&&\dot{{E}}_3(t)= (\theta(t)\dot{\theta}(t)+\frac{\dot{\eta}(t)}{2})\|\lambda(t)-\lambda^*\|^2+ t^\rho(\theta(t)+\rho t^{\rho-1}-\alpha t^{\rho-r})\|\dot{\lambda}(t)\|^2\\
	&&\quad+(\theta(t)(\theta(t)+\rho t^{\rho-1}-\alpha t^{\rho-r})+\eta(t)+t^p\dot{\theta}(t))\langle \lambda(t)-\lambda^*,\dot{\lambda}(t)\rangle \\
	&&\quad +\theta(t)t^p\langle  \lambda(t)-\lambda^*, Ax(t)+By(t)-b\rangle+\theta(t)\delta(t)t^{\rho}\langle \lambda(t)-\lambda^*, A\dot{x}(t)+B\dot{y}(t)\rangle\\
	&&\quad +t^{2\rho}\langle \dot{\lambda}(t),  Ax(t)+By(t)-b\rangle+ \delta(t)t^{2\rho}\langle \dot{\lambda}(t),A\dot{x}(t)+B\dot{y}(t)\rangle.
\end{eqnarray*}
Adding $\dot{{E}}_1(t)$, $\dot{{E}}_2(t)$, $\dot{{E}}_3(t)$ together, using $Ax^*+By^*= b$  and rearranging the terms, we get
\begin{equation*}
		\dot{{E}}_1(t)+	\dot{{E}}_2(t)+	\dot{{E}}_3(t) = \sum^5_{i=1} V_i(t),
\end{equation*}
where
\begin{eqnarray*}
	V_1(t) &=&  \left(\theta(t)\dot{\theta}(t)+\frac{\dot{\eta}(t)}{2}\right)(\|x(t)-x^*\|^2+\|y(t)-y^*\|^2+\|\lambda(t)-\lambda^*\|^2),\\
	V_2(t) &=&(\theta(t)(\theta(t)+\rho t^{\rho-1}-\alpha t^{\rho-r})+\eta(t)+t^\rho\dot{\theta}(t))\\
		&&\times(\langle x(t)-x^*,\dot{x}(t)\rangle+\langle y(t)-y^*,\dot{y}(t)\rangle+\langle \lambda(t)-\lambda^*,\dot{\lambda}(t)\rangle,\\
	V_3(t) &=& -\theta(t)t^\rho(\langle x(t)-x^*, \nabla g(x(t))+A^T\lambda^*\rangle+\langle y(t)-y^*, \nabla g(y(t))+B^T\lambda^*\rangle)\\
		 &&+ \theta(t)\delta(t)t^{\rho}\langle \lambda(t)-\lambda^*, A\dot{x}(t)+B\dot{y}(t)\rangle,\\
	V_4(t) &=& -\theta(t)t^\rho\|Ax(t)+By(t)-b\|^2\\
		 &&+t^\rho(\theta(t)+\rho t^{\rho-1}-\alpha t^{\rho-r}) (\|\dot{x}(t)\|^2+\|\dot{y}(t)\|^2+\|\dot{\lambda}(t)\|^2),\\
	V_5(t) &=& (t^{2\rho}-\theta(t)\delta(t)t^{\rho})\langle \dot{\lambda}(t),Ax(t)+By(t)-b\rangle\\
	\quad && -t^{2\rho}\langle \dot{x}(t),\nabla f(x(t))+A^T\lambda(t)+A^T(Ax(t)+By(t)-b)\rangle\\
	\quad && -t^{2\rho}\langle \dot{y}(t),\nabla g(y(t))+A^T\lambda(t)+B^T(Ax(t)+By(t)-b)\rangle.
\end{eqnarray*}
 Derivate of ${E}_0(t)$ to get
\begin{eqnarray*}
	\dot{{E}}_0(t) &=& {2\rho}t^{2\rho-1}(f(x(t))-f(x^*)+g(y(t))-g(y^*)+\langle \lambda^*,Ax(t)+By(t)-b\rangle)\\
	&&+ {\rho}t^{2\rho-1}\|Ax(t)+By(t)-b\|^2\\
	\quad &&+ t^{2\rho}(\langle \nabla f(x(t)),\dot{x}(t)\rangle+\langle \nabla g(y(t)),\dot{y}(t)\rangle+\langle \lambda^*, A\dot{x}(t)+B\dot{y}(t)\rangle)\\
	&&+t^{2\rho}\langle Ax(t)+By(t)-b,A\dot{x}(t)+B\dot{y}(t)\rangle.
\end{eqnarray*}

The existence and uniqueness of a local solution of the dynamic \eqref{dy:dy_unpertu} can be derived from Proposition \ref{pro:local_exist} when $\gamma(y)=\frac{\alpha}{t^r}$ with $r\in(-1,1)$ and  $\delta(t)$ is locally integrable.  In the following, we will further investigate the existence and uniqueness of  its global solution.

\begin{theorem}\label{th:exist_tr}
		Let $f$ and $g$ be two continuously differentiable functions such that $\nabla f$ and $\nabla g$ are locally Lipschitz continuous. Suppose that  $\gamma(t) = \frac{\alpha}{t^r}$ with $r\in(-1,1)$ and $\delta(t)=\frac{t}{2r_0}$  satisfying:
\begin{itemize}
		\item [(a)] $r_0>\frac{1+r}{2}$ and $\alpha>\max\lbrace 0, (4r_0+r+1)t_0^{r-1}\rbrace $  when $r\in(-1,0]$;
		\item [(b)] $r_0>r$ and $\alpha>\max\lbrace 0 ,(4r_0+2r)t_0^{r-1}\rbrace $  when $r\in(0,1)$.
\end{itemize}

Then  for any initial value $(x_0,y_0,\lambda_0,u_0,v_0,w_0)$, there exists a unique solution $(x(t),y(t),\lambda(t))$ with $x(t)\in\mathcal{C}^2([t_0,+\infty),\mathbb{R}^{n_1})$,  $y(t)\in\mathcal{C}^2([t_0,+\infty),\mathbb{R}^{n_2})$ and  $\lambda(t)\in\mathcal{C}^2([t_0,+\infty),\mathbb{R}^{m})$ of the dynamic \eqref{dy:dy_unpertu} satisfying $(x(t_0),y(t_0),\lambda(t_0))=(x_0,y_0,\lambda_0)$ and $(\dot{x}(t_0), \dot{y}(t_0),\dot{\lambda}(t_0))=(u_0,v_0,w_0)$. 
\end{theorem}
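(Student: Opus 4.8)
The plan is to follow the template of Theorem~\ref{th:th_exist_gen}. Proposition~\ref{pro:local_exist} already gives a unique solution of \eqref{dy:dy_unpertu} on a maximal interval $[t_0,T)\subseteq[t_0,+\infty)$, so it suffices to build an energy function of the form \eqref{eq:energy_tr}--\eqref{eq:energy_tr_sub} that is nonincreasing along the trajectory; its boundedness then rules out blow-up at a finite $T$.

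First I would fix the exponent $\rho>0$ by $\rho=\frac{1+r}{2}$ if $r\in(-1,0]$ and $\rho=r$ if $r\in(0,1)$, and choose $\theta(t)$ and $\eta(t)$ as forced by the decomposition $\dot E_1(t)+\dot E_2(t)+\dot E_3(t)=\sum_{i=1}^5 V_i(t)$ computed above. The identity $t^{2\rho}-\theta(t)\delta(t)t^\rho=0$ together with $\delta(t)=\frac{t}{2r_0}$ forces $\theta(t)=2r_0\,t^{\rho-1}$, which kills the constraint-residual term in $V_5(t)$; and requiring the coefficient of $\langle x-x^*,\dot x\rangle+\langle y-y^*,\dot y\rangle+\langle\lambda-\lambda^*,\dot\lambda\rangle$ in $V_2(t)$ to vanish, namely
\[\theta(t)\big(\theta(t)+\rho t^{\rho-1}-\alpha t^{\rho-r}\big)+\eta(t)+t^\rho\dot\theta(t)=0,\]
determines $\eta(t)=2r_0\alpha\,t^{2\rho-1-r}-2r_0(2r_0+2\rho-1)\,t^{2\rho-2}$. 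With these choices $V_2(t)\equiv0$, and after substituting $\nabla f(x^*)=-A^T\lambda^*$, $\nabla g(y^*)=-B^T\lambda^*$ the velocity-linear terms of $\dot E_0(t)$ and $V_5(t)$ cancel the $\langle\lambda-\lambda^*,A\dot x+B\dot y\rangle$ term of $V_3(t)$; what remains is $V_1(t)$, the velocity-square part of $V_4(t)$, and a zero-order part collecting $\dot E_0(t)$, the $\nabla f,\nabla g$ terms of $V_3(t)$ and the constraint term of $V_4(t)$.

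The crux is then to check, for all $t\ge t_0$, four sign conditions: (i) $\eta(t)>0$; (ii) $\theta(t)+\rho t^{\rho-1}-\alpha t^{\rho-r}\le0$, which makes the velocity-square part of $V_4(t)$ nonpositive; (iii) $\theta(t)\dot\theta(t)+\frac12\dot\eta(t)\le0$, which makes $V_1(t)\le0$; and (iv) $r_0\ge\rho$, which, via convexity of $f,g$ and the KKT relations \eqref{eq:saddle_point}, bounds the zero-order part above by $(2\rho-2r_0)t^{2\rho-1}\big(f(x)-f(x^*)+g(y)-g(y^*)+\langle\lambda^*,Ax+By-b\rangle\big)+(\rho-2r_0)t^{2\rho-1}\|Ax+By-b\|^2\le0$ (the bracket being nonnegative by convexity). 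Conditions (i) and (ii) reduce, after dividing by the relevant monotone power of $t$, to $\alpha\,t^{1-r}\ge 2r_0+2\rho-1$ and $\alpha\,t^{1-r}\ge 2r_0+\rho$; since $1-r>0$ the left sides are nondecreasing, so it is enough to check them at $t=t_0$, which is exactly what the lower bounds on $\alpha$ in hypotheses (a) and (b) (both equivalent to $\alpha>(4r_0+2\rho)t_0^{r-1}$) provide, while (iv) is the content of the remaining assumptions $r_0>\frac{1+r}{2}=\rho$, resp.\ $r_0>r=\rho$. Condition (iii) is where the case split at $r=0$ is needed: for $r\le0$ with $\rho=\frac{1+r}{2}$ the $\alpha$-contributions to $\theta\dot\theta+\frac12\dot\eta$ cancel and the remainder equals $r_0\,r(1-r)\,t^{r-2}\le0$ automatically, whereas for $r>0$ this same choice of $\rho$ would make it positive, forcing $\rho=r$, for which (iii) becomes $\alpha+2(1-2r)t^{r-1}\ge0$, again reducible to $t=t_0$ and covered by hypothesis (b). Establishing (i)--(iv) simultaneously — in particular choosing $\rho$ so that (iii) survives in each regime — is the main obstacle; the rest is bookkeeping in the lengthy derivative already written out above.

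Once $\dot E_{\theta,\eta}^\rho(t)\le0$ on $[t_0,T)$ is established, $E_{\theta,\eta}^\rho(\cdot)$ is bounded there, so from \eqref{eq:energy_tr_sub} and $\eta(t)>0$ the quantities $\eta(t)\|x(t)-x^*\|^2$ and $\|\theta(t)(x(t)-x^*)+t^\rho\dot x(t)\|$ are bounded, and similarly in $y$ and $\lambda$. Suppose, for contradiction, $T<+\infty$. Then $t^\rho$, $\theta(t)$ and $\eta(t)$ are bounded above and below by positive constants on $[t_0,T)$, so $(x,y,\lambda)$ and $(\dot x,\dot y,\dot\lambda)$ are bounded on $[t_0,T)$; feeding this into \eqref{dy:dy_unpertu} shows $(\ddot x,\ddot y,\ddot\lambda)$ is bounded as well, hence the solution and its velocity have limits as $t\to T^-$, so the solution extends beyond $T$, contradicting maximality. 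Therefore $T=+\infty$, and uniqueness is inherited from Proposition~\ref{pro:local_exist}.
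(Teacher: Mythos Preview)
Your proposal is correct and follows essentially the same approach as the paper: the same choices $\rho=\frac{1+r}{2}$ (resp.\ $\rho=r$), $\theta(t)=2r_0 t^{\rho-1}$, and $\eta(t)$ determined by killing $V_2$, the same four sign checks (i)--(iv), and the same blow-up contradiction at a finite $T$. The only cosmetic differences are that you package the two cases into a single formula for $\eta$ and, in the final step, invoke boundedness of the continuous positive coefficients on the compact interval $[t_0,T]$ rather than deriving explicit $t$-uniform bounds as the paper does; both are valid.
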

\begin{proof}

By Proposition \ref{pro:local_exist}, there exists a unique solution  $(x(t),y(t),\lambda(t))$  with $x(t)\in\mathcal{C}^2([t_0,T),\mathbb{R}^{n_1})$,  $y(t)\in\mathcal{C}^2([t_0,T),\mathbb{R}^{n_2})$ and  $\lambda(t)\in\mathcal{C}^2([t_0,T),\mathbb{R}^{m})$ of the dynamic \eqref{dy:dy_unpertu} defined on a maximal interval $[t_0,T)$ with $T\leq +\infty$  satisfying the initial condition: $(x(t_0),y(t_0),\lambda(t_0))=(x_0,y_0,\lambda_0)$ and $(\dot{x}(t_0), \dot{y}(t_0),\dot{\lambda}(t_0))=(u_0,v_0,w_0)$.  We shall show $T=+\infty$ in both cases.

Case (a). In this case, in \eqref{eq:energy_tr_sub}, we take $\rho=\frac{r+1}{2}$, 
\begin{equation}\label{eq:theta_eta_rneg}
	\theta(t) = 2r_0 t^{\frac{r-1}{2}}\quad\text{and}\quad \eta(t) =2r_0(\alpha - (2r_0+r)t^{r-1}).
\end{equation}
Clearly,  $\theta(t)>0$ for all $t\geq t_0$. We claim that 
 $$\alpha-(2r_0+r)t^{r-1} \geq \frac{\alpha}{2},\quad\forall t\ge t_0,$$
which yields
\begin{equation}\label{eq:ineq_eta_rneg}
	\eta(t)\geq r_0\alpha>0,\quad \forall t\geq t_0.
\end{equation}
Indeed, we have $\alpha-(2r_0+r)t^{r-1} \geq \frac{\alpha}{2}$  for all $t\ge t_0$ when $2r_0+r<0$.  When $2r_0+r\geq 0$, we have
 $\alpha-(2r_0+r)t^{r-1} \geq \alpha-(2r_0+r)t_0^{r-1}\geq\frac{\alpha}{2}$  for all $t\ge t_0$ since   $\alpha>(4r_0+r+1)t_0^{r-1}>(4r_0+2r)t_0^{r-1}$.

Next we shall prove that the energy function $E_{\theta,\eta}^{\rho}(t)$ defined by \eqref{eq:energy_tr} is nonincreasing on $[t_0,T)$. By computations, we have 
\[\dot{\theta}(t)=r_0(r-1)t^{\frac{r-3}{2}},\quad\text{and}\quad \dot{\eta}(t) = - (4r_0^2+2r_0r)(r-1)t^{r-2}.\]
This together with \eqref{eq:theta_eta_rneg} and $r\in (-1,0]$ yields
\begin{equation}\label{eq:rneg_cond_1}
	\theta(t)\dot{\theta}(t)+\frac{\dot{\eta}(t)}{2} = -r r_0(r-1)t^{r-2}\leq 0
\end{equation}
and 
\begin{equation}\label{eq:rneg_cond_2}
	\theta(t)(\theta(t)+\rho t^{\rho-1}-\alpha t^{\rho-r})+\eta(t)+t^\rho\dot{\theta}(t) = 0.
\end{equation}
Since $\alpha>(4r_0+r+1)t_0^{r-1} $,  we have
\begin{eqnarray}\label{eq:rneg_cond_3}
&&\theta(t)+\rho t^{\rho-1}-\alpha t^{\rho-r}= (2r_0+\frac{r+1}{2})t^\frac{r-1}{2}-\alpha t^{\frac{1-r}{2}} \nonumber \\
&&\qquad =  t^{\frac{1-r}{2}}(\frac{1}{2}((4r_0+r+1)t^{r-1}-\alpha)-\frac{\alpha}{2})\nonumber \\
	&&\qquad \leq  t^{\frac{1-r}{2}}(\frac{1}{2}((4r_0+r+1)t_0^{r-1}-\alpha)-\frac{\alpha}{2})\\
	&&\qquad < - \frac{\alpha}{2}t^{\frac{1-r}{2}}.\nonumber
\end{eqnarray}
From \eqref{eq:theta_eta_rneg} we get
\begin{equation}\label{eq:rneg_cond_4}
	 t^{2\rho}-\theta(t)\delta(t)t^\rho = t^{r+1}-2r_0t^{\frac{r-1}{2}}\times\frac{t}{2r_0}\times t^{\frac{r+1}{2}} = 0.
\end{equation}

By \eqref{eq:rneg_cond_1} and  \eqref{eq:rneg_cond_2}, ${V}_1(t)\leq 0$ and $ {V}_2(t)=0$ for all $t\geq t_0$.  
Since  $f$ and $g$ are convex, by using  \eqref{eq:rneg_cond_3} and \eqref{eq:rneg_cond_4} and  similar arguments as in  \eqref{eq:energy_der_gen}, we have
\begin{eqnarray}\label{eq:erergy_der_rneg}
	\dot{{E}}_{\theta,\eta}^{\rho}(t)&\leq& \dot{{E}}_0(t)+ {V}_3(t)+{V}_4(t)+{V}_5(t)\nonumber \\
	 &\leq& -r_0t^{r}\|Ax(t)+By(t)-b\|^2 -\frac{\alpha}{2} t (\|\dot{x}(t)\|^2+\|\dot{y}(t)\|^2+\|\dot{\lambda}(t)\|^2)\\
	 && -(2r_0-r-1) t^r(\mathcal{L}(x(t),y(t),\lambda^*)
	 -\mathcal{L}(x^*,y^*,\lambda^*))\nonumber\\\
	 &\leq& 0  \nonumber
	\end{eqnarray}
for $t\in[t_0,T)$. As a consequence, the function ${{E}}_{\theta,\eta}^{\rho}(\cdot)$ is nonincreasing on $[t_0,T)$, and so
\begin{equation*}
	{E}_{\theta,\eta}^{\rho}(t)\leq  {E}_{\theta,\eta}^{\rho}(t_0),\qquad \forall t\in[t_0,T).
\end{equation*} 
From \eqref{eq:energy_tr} we have
\[\frac{1}{2}\|\theta(t)(x(t)-x^*)+t^{\frac{r+1}{2}}\dot{x}(t)\|^2+ \frac{\eta(t)}{2}\|x(t)-x^*\|^2\leq   \mathcal{E}_{\theta,\eta}^{\rho}(t_0),\quad \forall t\in[t_0,T).\]
This implies 
\begin{equation}\label{eq:bound_x_rneg}
\sqrt{\eta(t)}\|x(t)-x^*\| \leq   \sqrt{2{E}_{\theta,\eta}^{\rho}(t_0)},\quad \forall t\in[t_0,T)
\end{equation}
and
\begin{equation}\label{eq:bound_dotx_rneg}
\|\theta(t)(x(t)-x^*)+t^{\frac{r+1}{2}}\dot{x}(t)\| \leq \sqrt{ 2{E}_{\theta,\eta}^{\rho}(t_0)},\quad\forall t\in[t_0,T).
\end{equation}
Combining \eqref{eq:bound_x_rneg} with \eqref{eq:ineq_eta_rneg} we get
\begin{equation}\label{addboundx}
	\|x(t)-x^*\|\leq \sqrt{\frac{2}{r_0\alpha}} \sqrt{{E}^\rho_{\theta,\eta}(t_0)}\qquad \forall t\in[t_0,T). 
\end{equation}
Since  $\dot{\theta}(t)=r_0(r-1)t^{\frac{r-3}{2}} \leq 0$,   $\theta(t)$ is nonincreasing. It follows  from \eqref{eq:bound_dotx_rneg} and \eqref{addboundx}  that for any $t\in [t_0,T)$
\[t^{\frac{r+1}{2}}\|\dot{x}(t)\|\leq \sqrt{2{E}^\rho_{\theta,\eta}(t_0)}+\theta(t)\|x(t)-x^*\|\leq  \sqrt{2{E}^\rho_{\theta,\eta}(t_0)}+\theta(t_0) \sqrt{\frac{2}{r_0\alpha}} {E}_{\theta,\eta}^{\rho}(t_0).\]
This together with  $r\in(-1,0]$ and $t_0\ge 1$ yields
\[\sup_{t\in[t_0,T)}\|\dot{x}(t)\|<+\infty.\]
By similar arguments, we have $\sup_{t\in[t_0,T)}\|\dot{y}(t)\|<+\infty$ and $\sup_{t\in[t_0,T)}\|\dot{\lambda}(t)\|<+\infty$.

Now assume on the contrary $T<+\infty$. Clearly, the trajectory $(x(t),y(t),\lambda(t))$ is  
bounded on $[t_0 , T)$. By  \eqref{dy:dy_unpertu} and  assumption,
 $(\ddot{x}(t),\ddot{y}(t),\ddot{\lambda}(t))$ is bounded on  $[t_0 , T)$. It ensues that the solution  $(x(t),y(t),\lambda(t))$ together with its derivative  $(\dot{x}(t),\dot{y}(t),\dot{\lambda}(t))$ have a limit at $t = T$ and therefore can be continued, a contradiction. 	Thus $T = +\infty$. 		

Case (b). In this case,  in \eqref{eq:energy_tr_sub},  we take  $\rho=r$,
\begin{equation}\label{eq:theta_eta_rpos}
	\theta(t)=2r_0t^{r-1}\quad \text{and} \quad  \eta(t) = 2r_0t^{r-1}((1-2r-2r_0)t^{r-1}+\alpha).
\end{equation}
Clearly, $\theta(t)>0$  for all $t\geq t_0$.  Now we show that
\begin{equation}\label{eq:ineq_eta_rpos}
	\eta(t)\geq  r_0\alpha t^{r-1}>0,\quad\forall  t\geq t_0.
\end{equation}
When $2r_0+2r\leq1$, we have 
$$\eta(t) = 2r_0t^{r-1}((1-2r-2r_0)t^{r-1}+\alpha)\geq 2r_0\alpha t^{r-1}\ge  r_0\alpha t^{r-1}>0.$$
When $2r_0+2r>1$, we have
 $\alpha-(2r_0+2r-1)t^{r-1} \geq \alpha-(2r_0+2r-1)t_0^{r-1}\geq\frac{\alpha}{2}$
since    $\alpha>(4r_0+2r)t_0^{r-1}>(4r_0+4r-2)t_0^{r-1}$. Thus \eqref{eq:ineq_eta_rpos} holds.

By computations, we have 
\[\dot{\theta}(t)=2r_0(r-1)t^{r-2} \]
and 
\[\dot{\eta}(t)=2r_0(r-1)t^{r-2}((2-4r-4r_0)t^{r-1}+\alpha).\]
This together with \eqref{eq:theta_eta_rpos} yields 
\begin{equation}\label{eq:eq_rpos_cond1}
	\theta(t)(\theta(t)+\rho t^{\rho-1}-\alpha t^{\rho-r})+\eta(t)+t^\rho\dot{\theta}(t) = 0
\end{equation}
and 
\begin{equation}\label{fypadd}
	\theta(t)\dot{\theta}(t)+\frac{\dot{\eta}(t)}{2} = r_0(r-1)t^{r-2}((2-4r)t^{r-1}+\alpha ).
\end{equation}
We claim that
\begin{equation}\label{eq:eq_rpos_cond2}
	\theta(t)\dot{\theta}(t)+\frac{\dot{\eta}(t)}{2}<0.
\end{equation}
Indeed, since $\alpha>\max\lbrace 0, (4r_0+2r)t_0^{r-1}\rbrace$,  $r_0>r$ and $r\in(0,1)$, we have $\alpha>(4r-2)t_0^{r-1}$. In the case $r\in(0,\frac{1}{2})$, we get $(2-4r)t^{r-1}+\alpha>0$ while in the case  $r\in[\frac{1}{2},1)$, we get\ $(2-4r)t^{r-1}+\alpha\geq \alpha-(4r-2)t_0^{r-1}>0$. So \eqref{eq:eq_rpos_cond2} follows from \eqref{fypadd}.
Since $r_0>r$ and $\alpha>(4r_0+2r)t_0^{r-1}$ with $t_0>1,r\in(0,1)$, we have
\begin{equation}\label{eq:eq_rpos_cond3}
 \theta(t)+rt^{r-1}-\alpha = (2r_0+r)t^{r-1}-\alpha \leq (2r_0+r)t_0^{r-1}-\alpha  <-\frac{\alpha}{2}, \forall t\geq t_0.
\end{equation}
By computations, we have 
\begin{equation}\label{eq:eq_rpos_cond4}
	 t^{2\rho}-\theta(t)\delta(t)t^\rho = t^{2r}-2r_0t^{r-1}\times\frac{t}{2r_0}\times t^r= 0.
\end{equation}
By  \eqref{eq:eq_rpos_cond1}-\eqref{eq:eq_rpos_cond4} and similar arguments as in (a), we get 
 \begin{eqnarray}\label{eq:energy_dot_rpos}
	&&\dot{{E}}_{\theta,\eta}^{\rho}(t)+r_0t^{2r-1}\|Ax(t)+By(t)-b\|^2 +\frac{\alpha}{2} t^r  (\|\dot{x}(t)\|^2+\|\dot{y}(t)\|^2+\|\dot{\lambda}(t)\|^2)\nonumber\\
	&&\qquad  + 2(r_0-r) t^{2r-1}(\mathcal{L}(x(t),y(t),\lambda^*)-\mathcal{L}(x^*,y^*,\lambda^*))\leq 0,
	\end{eqnarray}
for $t\in[t_0,T)$. This implies that the function ${{E}}_{\theta,\eta}^{\rho}(\cdot)$ is nonincreasing on $[t_0,T)$, and so
\begin{equation*}
	{E}_{\theta,\eta}^{\rho}(t)\leq  {E}_{\theta,\eta}^{\rho}(t_0),\qquad \forall t\in[t_0,T).
\end{equation*} 
By similar arguments as  in (a), we have
\begin{equation}\label{eq:bound_x_rpos}
\sqrt{\eta(t)}\|x(t)-x^*\| \leq   \sqrt{2{E}_{\theta,\eta}^{\rho}(t_0)},\quad \forall t\in[t_0,T)
\end{equation}
and
\begin{equation}\label{eq:bound_dotx_rpos}
\|\theta(t)(x(t)-x^*)+t^{r}\dot{x}(t)\| \leq \sqrt{ 2{E}_{\theta,\eta}^{\rho}(t_0)},\quad\forall t\in[t_0,T).
\end{equation}
Combining \eqref{eq:bound_x_rpos} with \eqref{eq:ineq_eta_rpos} we obtain
 \[t^\frac{r-1}{2}\|x(t)-x^*\|\leq \sqrt{\frac{2}{r_0\alpha}} \sqrt{{E}^\rho_{\theta,\eta}(t_0)},\quad \forall t\in[t_0,T). \]
It follows from \eqref{eq:bound_dotx_rpos} and \eqref{eq:theta_eta_rpos} that for any $t\in[t_0,T)$
\begin{equation}\label{bounddotxb}
t^r\|\dot{x}(t)\|\leq \sqrt{2{E}^\rho_{\theta,\eta}(t_0)}+2r_0t^{r-1}\|x(t)-x^*\|\leq  \sqrt{2{E}^\rho_{\theta,\eta}(t_0)}+\sqrt{\frac{8r_0}{\alpha}} t^\frac{r-1}{2}\sqrt{{E}^\rho_{\theta,\eta}(t_0)}.
\end{equation}
Since  $r\in(0,1)$ and $t_0\geq 1$, we have  $t^\frac{r-1}{2}\leq 1$ and   $t^r\geq 1$ for $t\geq t_0$.  The rest of the proof is same as the one in case (a).
\end{proof}

Next,  we  discuss the asymptotic behavior of  the dynamic \eqref{dy:dy_unpertu} with $\gamma(t)=\frac{\alpha}{t^r}$ and $r\in(-1,1)$. We firs consider the case  $r\in(-1,0]$.

\begin{theorem}\label{th:th_rate_rneg}
	 Suppose that $\gamma(t) =  \frac{\alpha}{t^{r}}$ with $r\in(-1,0]$ and $\alpha>0$, and $\sigma(t)=\frac{t}{2r_0}$ with $r_0>\frac{r+1}{2}$.
	Suppose that  $(x(t),y(t),\lambda(t))$ is a global solution of the  dynamic \eqref{dy:dy_unpertu} and $(x^*,y^*,\lambda^*)\in \Omega$. Then, the following conclusions hold:
	\begin{itemize}
		\item [(a)]$\mathcal{L}(x(t),y(t),\lambda^*)-\mathcal{L}(x^*,y^*,\lambda^*)=\mathcal{O}(t^{-(r+1)}).$
		\item [(b)] $\|Ax(t)+By(t)-b\|=\mathcal{O}(t^{-\frac{r+1}{2}}).$
		\item [(c)]  $\int^{+\infty}_{t_0}t^{r}\|Ax(t)+By(t)-b\|^2 dt <+\infty.$
		\item[(d)] $\int^{+\infty}_{t_0}t^{r}(	\mathcal{L}(x(t),y(t),\lambda^*)-\mathcal{L}(x^*,y^*,\lambda^*)) dt <+\infty.  $
		\item [(e)]  $\int^{+\infty}_{t_0}t(\|\dot{x}(t)\|^2+\|\dot{y}(t)\|^2+\|\dot{\lambda}(t)\|^2) dt <+\infty.$
		\item  [(f)] $\|\dot{x}(t)\|+\|\dot{y}(t)\|+\|\dot{\lambda}(t)\|  =\mathcal{O}(t^{-\frac{r+1}{2}})$ .
\end{itemize}
\end{theorem}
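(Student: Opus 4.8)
The plan is to reuse the Lyapunov function built in the proof of Theorem~\ref{th:exist_tr}(a): take $\rho=\frac{r+1}{2}$,
\[\theta(t)=2r_0t^{\frac{r-1}{2}},\qquad \eta(t)=2r_0\bigl(\alpha-(2r_0+r)t^{r-1}\bigr),\]
and let $E^{\rho}_{\theta,\eta}$ be the energy function \eqref{eq:energy_tr}. Since $r\in(-1,0]$ gives $r-1<0$, the correction $t^{r-1}\to 0$, so there is a $t_1\ge t_0$ such that for all $t\ge t_1$ one has $\eta(t)\ge r_0\alpha>0$ and $\theta(t)+\rho t^{\rho-1}-\alpha t^{\rho-r}\le-\frac{\alpha}{2}t^{\frac{1-r}{2}}$. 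These are precisely the two sign conditions (beyond the algebraic identities \eqref{eq:rneg_cond_1}, \eqref{eq:rneg_cond_2} and \eqref{eq:rneg_cond_4}, which hold for all $t\ge t_0$) needed to run the computation leading to \eqref{eq:erergy_der_rneg}. Hence on $[t_1,+\infty)$ the function $E^{\rho}_{\theta,\eta}(\cdot)$ is nonnegative, nonincreasing, and bounded above by $E^{\rho}_{\theta,\eta}(t_1)$, and \eqref{eq:erergy_der_rneg} holds there.

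Conclusion (a) is then immediate: $E_0(t)=t^{r+1}\bigl(\mathcal{L}(x(t),y(t),\lambda^*)-\mathcal{L}(x^*,y^*,\lambda^*)\bigr)$ is one of the nonnegative summands of $E^{\rho}_{\theta,\eta}(t)\le E^{\rho}_{\theta,\eta}(t_1)$. For (b) I would use the elementary bound
\[\mathcal{L}(x(t),y(t),\lambda^*)-\mathcal{L}(x^*,y^*,\lambda^*)\ \ge\ \tfrac12\|Ax(t)+By(t)-b\|^2,\]
obtained exactly as in \eqref{eq:th_gen_constain} by expanding $\mathcal{L}$, replacing the linear terms via the KKT relations \eqref{eq:saddle_point}, and dropping the nonnegative remainders produced by convexity of $f$ and $g$; combining this with (a) yields (b).

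For (c), (d), (e) I would integrate \eqref{eq:erergy_der_rneg} over $[t_1,T)$ and let $T\to+\infty$; since $E^{\rho}_{\theta,\eta}\ge0$, the three weighted integrals $\int t^{r}\|Ax+By-b\|^2\,dt$, $\int t^{r}(\mathcal{L}-\mathcal{L})\,dt$ and $\int t(\|\dot x\|^2+\|\dot y\|^2+\|\dot\lambda\|^2)\,dt$ over $[t_1,+\infty)$ are each at most $E^{\rho}_{\theta,\eta}(t_1)$ divided by the coefficients $r_0$, $2r_0-r-1$, $\tfrac{\alpha}{2}$, which are strictly positive because $r_0>\tfrac{r+1}{2}$ and $\alpha>0$; continuity of the integrands on the compact $[t_0,t_1]$ extends the integrals to $[t_0,+\infty)$. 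For (f): from $\tfrac12\|\theta(t)(x(t)-x^*)+t^{\rho}\dot x(t)\|^2+\tfrac{\eta(t)}{2}\|x(t)-x^*\|^2\le E^{\rho}_{\theta,\eta}(t_1)$ and $\eta(t)\ge r_0\alpha$, the quantity $\|x(t)-x^*\|$ is bounded on $[t_1,+\infty)$, hence (with continuity on $[t_0,t_1]$) on all of $[t_0,+\infty)$; since $\theta$ is nonincreasing, $t^{\rho}\|\dot x(t)\|\le\sqrt{2E^{\rho}_{\theta,\eta}(t_1)}+\theta(t_1)\sup_{s\ge t_0}\|x(s)-x^*\|$, so $\|\dot x(t)\|=\mathcal{O}(t^{-\rho})=\mathcal{O}(t^{-\frac{r+1}{2}})$, and the same argument applied to the $y$- and $\lambda$-blocks gives (f).

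The only genuine subtlety, and the one point where this proof departs from Theorem~\ref{th:exist_tr}(a), is that here $\alpha$ is not assumed large relative to $t_0^{r-1}$, so neither $\eta(t)>0$ nor the estimate on $\theta(t)+\rho t^{\rho-1}-\alpha t^{\rho-r}$ is available near $t_0$; one must first pass to a tail $[t_1,+\infty)$ on which \eqref{eq:erergy_der_rneg} is valid and then patch the compact piece $[t_0,t_1]$ back in by continuity. Everything else is bookkeeping, since the term-by-term differentiation of $E_0,E_1,E_2,E_3$ and the verification of \eqref{eq:rneg_cond_1}--\eqref{eq:rneg_cond_4} are already carried out in the proof of Theorem~\ref{th:exist_tr}.
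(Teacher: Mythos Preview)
Your proposal is correct and follows essentially the same approach as the paper's own proof: both reuse the Lyapunov function from Theorem~\ref{th:exist_tr}(a) with $\rho=\frac{r+1}{2}$ and the same $\theta,\eta$, pass to a tail $[t_1,+\infty)$ where the sign conditions hold (the paper phrases this as choosing $t_1$ with $\alpha>(4r_0+r+1)t_1^{r-1}$, which is equivalent to your two conditions), and then read off (a)--(f) from the boundedness and dissipation inequality \eqref{eq:erergy_der_rneg}. Your treatment is in fact slightly more explicit about patching the compact interval $[t_0,t_1]$ back in by continuity, which the paper leaves implicit.
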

\begin{proof}
Take $\theta(t)$ and $\eta(t)$ as in \eqref{eq:theta_eta_rneg}. Consider the energy function  $E^\rho_{\theta,\eta}$ defined by \eqref{eq:energy_tr} with  $\rho=\frac{r+1}{2}$. Since  $\alpha>0$ and $r\in (-1,0]$, there exists $t_1\geq t_0$ such that $\alpha>(4r_0+r+1)t_1^{r-1}$. From  \eqref{eq:erergy_der_rneg} we get
\begin{eqnarray}\label{eq:energy_th_rneg}
	&&\dot{{E}}_{\theta,\eta}^{\rho}(t)+r_0t^{r}\|Ax(t)+By(t)-b\|^2 +\frac{\alpha}{2} t (\|\dot{x}(t)\|^2+\|\dot{y}(t)\|^2+\|\dot{\lambda}(t)\|^2)\nonumber \\
	 &&\qquad +(2r_0-r-1) t^r(\mathcal{L}(x(t),y(t),\lambda^*)
	 -\mathcal{L}(x^*,y^*,\lambda^*))\leq 0  
	\end{eqnarray}
for any $t\geq t_1$, which implies 
\begin{equation}\label{eq:energy_decrease_th_rneg}
	{E}_{\theta,\eta}^{\beta}(t)\leq  {E}_{\theta,\eta}^{\beta}(t_1),\qquad\forall  t\geq t_1.
\end{equation} This together with \eqref{eq:energy_tr} implies  (a), i.e.,
\begin{equation*}
	\mathcal{L}(x(t),y(t),\lambda^*)-\mathcal{L}(x^*,y^*,\lambda^*)=\mathcal{O}(t^{-(r+1)}).
\end{equation*}
By same arguments as in \eqref{eq:th_gen_constain}, we have (b):
\[ \|Ax(t)+By(t)-b\|= \mathcal{O}(t^{-\frac{r+1}{2}}).
\]
Since $r_0>\frac{r+1}{2}>0$ and $\alpha>0$, integrating the inequality \eqref{eq:energy_th_rneg} on $[t_1,+\infty)$, we have
  \[\int^{+\infty}_{t_1}t^{r}\|Ax(t)+By(t)-b\|^2 dt <+\infty, \]
  \[\int^{+\infty}_{t_1}t(\|\dot{x}(t)\|^2+\|\dot{y}(t)\|^2+\|\dot{\lambda}(t)\|^2) dt <+\infty, \]
  \[\int^{+\infty}_{t_1}t^{r}(	\mathcal{L}(x(t),y(t),\lambda^*)-\mathcal{L}(x^*,y^*,\lambda^*)) dt <+\infty. \]
Since $\alpha>(4r_0+r+1)t_1^{r-1}$, from  \eqref{eq:ineq_eta_rneg} we have
\[\eta(t) \geq r_0\alpha,\quad\forall t\geq t_1.\]
This yields (c)-(e).
As shown in the proof of (a) of Theorem \ref{th:exist_tr}, we have
\[\|x(t)-x^*\|\leq \sqrt{\frac{2}{\eta(t)}}\sqrt{E^\rho_{\theta,\eta}(t_1)}\leq \sqrt{\frac{2}{r_0\alpha}}\sqrt{E^\rho_{\theta,\eta}(t_1)}, \quad \forall t\geq t_1,\]
and 
\[t^{\frac{r+1}{2}}\|\dot{x}(t)\| \leq  \sqrt{2{E}^\rho_{\theta,\eta}(t_1)}+\theta(t_1)\|x(t)-x^*\|,\quad \forall t\geq t_1.\]
This implies
\[ \|\dot{x}(t)\|=\mathcal{O}(t^{-\frac{r+1}{2}}).\]
By similar arguments, we have 
\[ \|\dot{y}(t)\|=\mathcal{O}(t^{-\frac{r+1}{2}})\quad\text{and }\quad \|\dot{\lambda}(t)\|=\mathcal{O}(t^{-\frac{r+1}{2}}).\]
So we have (f):
\[ \|\dot{x}(t)\|+\dot{y}(t)\|+ \|\dot{\lambda}(t)\|=\mathcal{O}(t^{-\frac{r+1}{2}}).\]
\end{proof}

Now we investigate  the asymptotic behavior of  the dynamic \eqref{dy:dy_unpertu} with $\gamma(t)=\frac{\alpha}{t^r}$ and $r \in(0,1)$.

\begin{theorem}\label{th:th_rate_rpos}
	Suppose that $\gamma(t) =  \frac{\alpha}{t^{r}}$ with $r\in(0,1)$ and $\alpha>0$, and $\sigma(t)=\frac{t}{2r_0}$ with $r_0>r$.
	Suppose that $(x(t),y(t),\lambda(t))$ is a global solution of the dynamic \eqref{dy:dy_unpertu} and $(x^*,y^*,\lambda^*)\in \Omega$. Then, the following conclusions hold:
	\begin{itemize}
		\item [(a)]$\mathcal{L}(x(t),y(t),\lambda^*)-\mathcal{L}(x^*,y^*,\lambda^*)=\mathcal{O}(t^{-2r})$.
		\item [(b)] $\|Ax(t)+By(t)-b\|= \mathcal{O}(t^{-r}) $.
		\item [(c)]  $\int^{+\infty}_{t_0}t^{2r-1}\|Ax(t)+By(t)-b\|^2 dt <+\infty.$
		\item [(d)]  $\int^{+\infty}_{t_0}t^r(\|\dot{x}(t)\|^2+\|\dot{y}(t)\|^2+\|\dot{\lambda}(t)\|^2) dt <+\infty.$
  		\item[(e)] $\int^{+\infty}_{t_0}t^{2r-1}(	\mathcal{L}(x(t),y(t),\lambda^*)-\mathcal{L}(x^*,y^*,\lambda^*)) dt <+\infty. $
		\item  [(f)] $\|\dot{x}(t)\|+ \|\dot{y}(t)\|+\|\dot{\lambda}(t)\|=\mathcal{O}(t^{-r})$. 
\end{itemize}
\end{theorem}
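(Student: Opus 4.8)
The plan is to mirror, almost verbatim, the argument already used for Theorem~\ref{th:th_rate_rneg}, now relying on the Lyapunov estimate established inside the proof of Case~(b) of Theorem~\ref{th:exist_tr}. Concretely, I would work with the energy function $E_{\theta,\eta}^{\rho}$ of \eqref{eq:energy_tr}--\eqref{eq:energy_tr_sub} for the choice $\rho=r$ and $\theta(t),\eta(t)$ given by \eqref{eq:theta_eta_rpos}, and with the differential inequality \eqref{eq:energy_dot_rpos}. \textbf{Activation of the Lyapunov inequality.} Since the present hypotheses only require $\alpha>0$ and $r_0>r$, rather than the quantitative bound $\alpha>(4r_0+2r)t_0^{r-1}$ assumed in Theorem~\ref{th:exist_tr}, I would first fix $t_1\geq t_0$ large enough that $\alpha>(4r_0+2r)t_1^{r-1}$, which is possible because $r-1<0$. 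On $[t_1,+\infty)$ all the sign conditions invoked in the proof of Case~(b) of Theorem~\ref{th:exist_tr} hold verbatim — in particular \eqref{eq:ineq_eta_rpos} in the sharp form $\eta(t)\geq r_0\alpha t^{r-1}>0$, together with the analogues of \eqref{eq:eq_rpos_cond2} and \eqref{eq:eq_rpos_cond3} — so \eqref{eq:energy_dot_rpos} is valid for all $t\geq t_1$, and hence $E_{\theta,\eta}^{\rho}$ is nonincreasing, and therefore bounded, on $[t_1,+\infty)$.

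\textbf{The rates (a) and (b).} Boundedness of $E_{\theta,\eta}^{\rho}$ together with the term $E_0(t)=t^{2r}(\mathcal{L}(x(t),y(t),\lambda^*)-\mathcal{L}(x^*,y^*,\lambda^*))$ gives (a) at once. For (b), I would repeat the computation \eqref{eq:th_gen_constain}: convexity of $f$ and $g$ combined with the KKT relations \eqref{eq:saddle_point} yields $\mathcal{L}(x(t),y(t),\lambda^*)-\mathcal{L}(x^*,y^*,\lambda^*)\geq \tfrac12\|Ax(t)+By(t)-b\|^2$, and pairing this with (a) gives $\|Ax(t)+By(t)-b\|=\mathcal{O}(t^{-r})$.

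\textbf{The integral estimates (c)--(e).} Since $r_0>r>0$ and $\alpha>0$, each of the three nonnegative terms appearing alongside $\dot{E}_{\theta,\eta}^{\rho}(t)$ in \eqref{eq:energy_dot_rpos} has a strictly positive coefficient; integrating \eqref{eq:energy_dot_rpos} over $[t_1,+\infty)$ and using that $E_{\theta,\eta}^{\rho}$ is bounded below gives (c), (d), (e) with lower limit $t_1$, and the statements with lower limit $t_0$ follow because the integrands are continuous on the compact interval $[t_0,t_1]$. \textbf{The velocity rate (f).} From boundedness of $E_{\theta,\eta}^{\rho}$ and $\eta(t)\geq r_0\alpha t^{r-1}$ one obtains, as in \eqref{eq:bound_x_rpos}, $t^{(r-1)/2}\|x(t)-x^*\|=\mathcal{O}(1)$, and likewise for $y$ and $\lambda$. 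The estimate \eqref{bounddotxb}, namely $t^{r}\|\dot{x}(t)\|\leq \sqrt{2E_{\theta,\eta}^{\rho}(t_1)}+2r_0 t^{r-1}\|x(t)-x^*\|$, together with $r\in(0,1)$ and $t_0\geq 1$ (so $t^{(r-1)/2}\leq 1$ and $t^{r}\geq 1$ for $t\geq t_1$), then yields $\|\dot{x}(t)\|=\mathcal{O}(t^{-r})$, and the same for $\dot{y}$ and $\dot{\lambda}$, which is (f).

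\textbf{Main obstacle.} There is no substantive obstacle: the entire Lyapunov computation was already carried out in the proof of Theorem~\ref{th:exist_tr}(b). The only point demanding care is the bookkeeping around the threshold $t_1$ — one must verify that weakening $\alpha>(4r_0+2r)t_0^{r-1}$ to $\alpha>0$ merely shifts the starting time, that every sign inequality used to derive \eqref{eq:energy_dot_rpos} persists on $[t_1,+\infty)$, and that none of the asymptotic or integrability conclusions is affected by replacing $t_0$ with $t_1$.
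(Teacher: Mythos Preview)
Your proposal is correct and follows essentially the same approach as the paper: the same energy function $E_{\theta,\eta}^{\rho}$ with $\rho=r$ and $\theta,\eta$ from \eqref{eq:theta_eta_rpos}, the same shift to a threshold time $t_1$ with $\alpha>(4r_0+2r)t_1^{r-1}$, the same use of \eqref{eq:energy_dot_rpos} to obtain (a)--(e), and the same extraction of (f) from $\eta(t)\geq r_0\alpha t^{r-1}$ together with \eqref{bounddotxb}. If anything, your treatment of the passage from lower limit $t_1$ back to $t_0$ in (c)--(e) is slightly more explicit than the paper's.
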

\begin{proof}
Take $\theta(t)$ and $\eta(t)$ as in  \eqref{eq:theta_eta_rpos}. Consider the energy function $E^\rho_{\theta,\eta}$ defined by \eqref{eq:energy_tr} with $\rho = r$. Since $\alpha>0$ and $r\in (0,1)$, there exists $t_1\geq t_0$ such that $\alpha>(4r_0+2r)t_1^{r-1}$. Then, it follows from \eqref{eq:energy_dot_rpos} that
\begin{eqnarray}\label{eq:energy_th_dot_rneg}
	&&\dot{{E}}_{\theta,\eta}^{\rho}(t)+r_0t^{2r-1}\|Ax(t)+By(t)-b\|^2 +\frac{\alpha}{2} t^r  (\|\dot{x}(t)\|^2+\|\dot{y}(t)\|^2+\|\dot{\lambda}(t)\|^2)\nonumber\\
	&&\qquad  + 2(r_0-r) t^{2r-1}(\mathcal{L}(x(t),y(t),\lambda^*)-\mathcal{L}(x^*,y^*,\lambda^*))\leq 0
	\end{eqnarray}
for any $t\geq t_1$. This implies
\begin{equation*}\label{eq:energy_th_decrease_rneg}
	{E}_{\theta,\eta}^{\rho}(t)\leq  {E}_{\theta,\eta}^{\rho}(t_1),\qquad \forall t\geq t_1.
\end{equation*}
By same arguments as in the proof of Theorem \ref{th:th_rate_rneg}, we can prove (a) and (b).
Since $r_0>r>0$, and $\alpha>0$, integrating the inequality \eqref{eq:energy_th_dot_rneg} on $[t_1,+\infty)$, we have
  \[\int^{+\infty}_{t_1}t^{2r-1}\|Ax(t)+By(t)-b\|^2 dt <+\infty, \]
  \[\int^{+\infty}_{t_1}t^r(\|\dot{x}(t)\|^2+\|\dot{y}(t)\|^2+\|\dot{\lambda}(t)\|^2) dt <+\infty, \]
  \[\int^{+\infty}_{t_1}t^{2r-1}(	\mathcal{L}(x(t),y(t),\lambda^*)-\mathcal{L}(x^*,y^*,\lambda^*)) dt <+\infty, \]
which implies (c)-(e).  Since $\alpha>(4r_0+2r)t_1^{r-1}$, from  \eqref{eq:ineq_eta_rpos} we have
\[\eta(t) \geq r_0\alpha t^{r-1},\quad \forall t\geq t_1.\]
Then, as shown in the proof of (b) of Theorem \ref{th:exist_tr}, we have
\[ t^{\frac{r-1}{2}}\|x(t)-x^*\|\leq \sqrt{\frac{2}{r_0\alpha}}\sqrt{E^\rho_{\theta,\eta}(t_1)}, \quad \forall t\geq t_1,\]
and 
\[t^r\|\dot{x}(t)\| \leq  \sqrt{2{E}^\rho_{\theta,\eta}(t_1)}+2r_0t^{r-1}\|x(t)-x^*\|,\quad t\geq t_1.\]
This yields
\begin{eqnarray*}
	t^r\|\dot{x}(t)\| \leq  \sqrt{2{E}^\rho_{\theta,\eta}(t_1)}+t^{\frac{r-1}{2}} \sqrt{\frac{2}{r_0\alpha}}\sqrt{E^\rho_{\theta,\eta}(t_1)} \leq  \sqrt{2{E}^\rho_{\theta,\eta}(t_1)}+ \sqrt{\frac{2}{r_0\alpha}}\sqrt{E^\rho_{\theta,\eta}(t_1)}
\end{eqnarray*}
since $t^{\frac{r-1}{2}}\leq 1$ for all $t\geq t_1$ when  $r\in(0,1)$. This yields 
\[ \|\dot{x}(t)\|=\mathcal{O}(t^{-r}).\]
By similar arguments, we have
\[ \|\dot{y}(t)\|=\mathcal{O}(t^{-r})\quad \text{and }\quad\|\dot{\lambda}(t)\|=\mathcal{O}(t^{-r}).\]
This proves $(f)$.
\end{proof}

\begin{remark}
In the case  $\gamma(t) =  \frac{\alpha}{t^{r}}$ with $\alpha>0$, asymptotic behaviors  of  $(IGS)_\gamma$ and $(IGS)_{\gamma,\epsilon}$ have been discussed in \cite{AttouchC2017} and \cite{Balti2016} respectively.
\end{remark}

\section{The perturbed case}
In this section, we analyze the asymptotic behavior of the following inertial primal-dual dynamical system with external perturbations:
\begin{equation}\label{dy:dy_pertu}
	\begin{cases}
		\ddot{x}(t)+\gamma(t)\dot{x}(t) = -\nabla f(x(t))-A^T(\lambda(t)+\sigma(t)\dot{\lambda}(t))-A^T(Ax(t)+By(t)-b)+\epsilon(t),\\
		\ddot{y}(t)+\gamma(t)\dot{y}(t) = -\nabla g(y(t))-B^T(\lambda(t)+\sigma(t)\dot{\lambda}(t))-B^T(Ax(t)+By(t)-b)+\epsilon(t),\\
		\ddot{\lambda}(t)+\gamma(t)\dot{\lambda}(t) = A(x(t)+\sigma(t)\dot{x}(t))+B(y(t)+\sigma(t)\dot{y}(t))-b.
	\end{cases}
\end{equation}

When $\epsilon(t)$ decays rapidly enough to zeros as $t\to+\infty$, we will show that asymptotic properties established in  the pervious sections are preserved.

\begin{theorem}\label{th:rate_pertu_gen}
Let $\gamma:[t_0,+\infty)\to(0,+\infty)$ be a nonincreasing  and twice continuously differentiable function satisfying   \eqref{eq:inqe_ddot_gamma} and  \eqref{eq:int_gamma}, $\delta(t)=\frac{1}{\beta_0\gamma(t)}$ with $\beta_0\in[2\beta,1-\beta]$ and  let
  $\epsilon:[t_0,+\infty)\to\mathbb{R}$ be a locally integrable function such that
			\begin{equation}\label{con-ep-p}
\int^{+\infty}_{t_0}p(t)^{\beta}\|\epsilon(t)\|dt<+\infty,
\end{equation}
where  $p(t) = e^{\int_{t_0}^t\gamma(s) ds}$ is defined in \eqref{eq:pt}. Suppose that $(x(t),y(t),\lambda(t))$ is a global solution of the dynamic \eqref{dy:dy_pertu} and  $(x^*,y^*,\lambda^*)$ $\in\Omega$.  Then, the following conclusions hold:
\begin{itemize}
		\item [(a)]	$\mathcal{L}(x(t),y(t),\lambda^*)-\mathcal{L}(x^*,y^*,\lambda^* )= \mathcal{O}(p(t)^{-2\beta}).$
		\item[(b)] $ \|Ax(t)+By(t)-b\|= \mathcal{O}(p(t)^{-\beta}).	$
		\item [(c)] $\int^{+\infty}_{t_0} p(t)^{2\beta}\gamma(t)\|Ax(t)+By(t)-b\|^2 dt <+\infty. $
\end{itemize}
Moreover, we have the following results:
\begin{itemize}
\item[Case I]: $\beta<\frac{1}{3}$ and $\beta_0\in(2\beta,1-\beta)$.  Then
\begin{itemize}
	 \item [($d$)]  $\int^{+\infty}_{t_0}p(t)^{2\beta}\gamma(t)(\mathcal{L}(x(t),y(t),\lambda^*)-\mathcal{L}(x^*,y^*,\lambda^*))dt <+\infty.$
	\item [($e$)]$\int^{+\infty}_{t_0}p(t)^{2\beta}\gamma(t)(\|\dot{x}(t)\|^2+\|\dot{y}(t)\|^2+\|\dot{\lambda}(t)\|^2) dt <+\infty. $
	\item [($f$)] $\|\dot{x}(t)\|+ \|\dot{y}(t)\|+ \|\dot{\lambda}(t)\| =  \mathcal{O}(p(t)^{-\beta})).$

\end{itemize}
 \item[Case II]: $\beta=\frac{1}{3}$ and  $\beta_0=\frac{2}{3}$. Then for any $ \tau\in(0,\frac{1}{3})$ we have
\begin{itemize}
	 \item [($d'$)]  $\int^{+\infty}_{t_0}p(t)^{2\tau}\gamma(t)(\mathcal{L}(x(t),y(t),\lambda^*)-\mathcal{L}(x^*,y^*,\lambda^*))dt <+\infty.$
	\item [($e'$)]$\int^{+\infty}_{t_0}p(t)^{2\tau}\gamma(t)(\|\dot{x}(t)\|^2+\|\dot{y}(t)\|^2+\|\dot{\lambda}(t)\|^2) dt <+\infty. $	 
	\item [($f'$)] $ \|\dot{x}(t)\|+ \|\dot{y}(t)\|+ \|\dot{\lambda}(t)\| =  \mathcal{O}(p(t)^{-\tau}).$
\end{itemize}
\end{itemize}

\end{theorem}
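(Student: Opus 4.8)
The plan is to re-run the Lyapunov computation behind Theorem~\ref{th:rate_gen} verbatim, using the \emph{same} energy function $\mathcal{E}_{\theta,\eta}^{\beta}(t)$ from \eqref{eq:energy_fun_gen} and the \emph{same} choice of $\theta(t),\eta(t)$ in \eqref{eq:theta_eta_gen}, and simply bookkeeping the extra contribution of $\epsilon(t)$. Since $\epsilon(t)$ enters the first two equations of \eqref{dy:dy_pertu} additively, multiplying by $p(t)^{\beta}$ and repeating the derivation of $\dot{\mathcal{E}}_1(t)$ and $\dot{\mathcal{E}}_2(t)$ leaves $\dot{\mathcal{E}}_0(t)$ and $\dot{\mathcal{E}}_3(t)$ unchanged and introduces only the two new terms $p(t)^{\beta}\langle\theta(t)(x(t)-x^*)+p(t)^{\beta}\dot{x}(t),\epsilon(t)\rangle$ and $p(t)^{\beta}\langle\theta(t)(y(t)-y^*)+p(t)^{\beta}\dot{y}(t),\epsilon(t)\rangle$. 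The algebraic identities \eqref{eq:cond_gen_1}--\eqref{eq:cond_gen_3} still hold with the same $\theta,\eta$, so the argument that produced \eqref{eq:th_rate_der_energy} now yields, after estimating the two new terms by Cauchy--Schwarz and using $\|\theta(t)(x(t)-x^*)+p(t)^{\beta}\dot{x}(t)\|^2\le 2\mathcal{E}_{\theta,\eta}^{\beta}(t)$ (all four summands of $\mathcal{E}_{\theta,\eta}^{\beta}$ being nonnegative), the differential inequality
\[
\dot{\mathcal{E}}_{\theta,\eta}^{\beta}(t)+N(t)\le 2\sqrt{2}\,p(t)^{\beta}\|\epsilon(t)\|\sqrt{\mathcal{E}_{\theta,\eta}^{\beta}(t)},\qquad\forall t\ge t_0,
\]
where $N(t)\ge 0$ collects the three dissipative terms $(\beta_0-2\beta)p(t)^{2\beta}\gamma(t)(\mathcal{L}(x(t),y(t),\lambda^*)-\mathcal{L}(x^*,y^*,\lambda^*))$, $\tfrac{\beta_0}{2}p(t)^{2\beta}\gamma(t)\|Ax(t)+By(t)-b\|^2$ and $(1-\beta-\beta_0)p(t)^{2\beta}\gamma(t)(\|\dot{x}(t)\|^2+\|\dot{y}(t)\|^2+\|\dot{\lambda}(t)\|^2)$ already appearing in \eqref{eq:th_rate_der_energy}.

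Dropping $N(t)$ gives $\dot{\mathcal{E}}_{\theta,\eta}^{\beta}(t)\le 2\sqrt{2}\,p(t)^{\beta}\|\epsilon(t)\|\sqrt{\mathcal{E}_{\theta,\eta}^{\beta}(t)}$; a standard one-dimensional comparison lemma (to handle the zero set of $\mathcal{E}_{\theta,\eta}^{\beta}$ when dividing by $2\sqrt{\mathcal{E}_{\theta,\eta}^{\beta}}$) then yields $\tfrac{d}{dt}\sqrt{\mathcal{E}_{\theta,\eta}^{\beta}(t)}\le\sqrt{2}\,p(t)^{\beta}\|\epsilon(t)\|$, and integrating together with \eqref{con-ep-p} shows $\mathcal{E}_{\theta,\eta}^{\beta}$ is bounded on $[t_0,+\infty)$, say by $M$. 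From $\mathcal{E}_0(t)\le M$ we read off (a), and (b) follows from the convexity/KKT estimate already recorded in \eqref{eq:th_gen_constain}, which is independent of the dynamics. For (c)--(e) in Case~I (where $\beta_0-2\beta>0$ and $1-\beta-\beta_0>0$) I integrate the displayed inequality over $[t_0,+\infty)$, bounding the right-hand side by $2\sqrt{2M}\,p(t)^{\beta}\|\epsilon(t)\|$ and using $\mathcal{E}_{\theta,\eta}^{\beta}\ge 0$ and \eqref{con-ep-p}, then dividing by the positive constants. For (f), $\mathcal{E}_{\theta,\eta}^{\beta}(t)\le M$ and \eqref{eq:eta_gen} give $\sup_{t\ge t_0}p(t)^{\beta}\gamma(t)\|x(t)-x^*\|<+\infty$, whence $p(t)^{\beta}\|\dot{x}(t)\|\le\sqrt{2M}+\beta_0 p(t)^{\beta}\gamma(t)\|x(t)-x^*\|$ is bounded, exactly as in the proof of Theorem~\ref{th:th_exist_gen}(b); the same estimates apply to $y$ and $\lambda$.

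Finally, Case~II reduces to Case~I: for any $\tau\in(0,\tfrac13)$ one has $\ddot{\gamma}(t)\ge\tfrac{2}{9}\gamma(t)^3\ge 2\tau^2\gamma(t)^3$ and $\beta_0=\tfrac23\in(2\tau,1-\tau)$, while $p(t)\ge 1$ and $\tau<\tfrac13$ give $\int_{t_0}^{+\infty}p(t)^{\tau}\|\epsilon(t)\|\,dt\le\int_{t_0}^{+\infty}p(t)^{1/3}\|\epsilon(t)\|\,dt<+\infty$, so Case~I applies with $\beta$ replaced by $\tau$ and delivers (d'), (e'), (f'). The only genuinely new point relative to Theorem~\ref{th:rate_gen} is the passage from the quadratic-in-$\sqrt{\mathcal{E}_{\theta,\eta}^{\beta}}$ differential inequality to the uniform bound on $\mathcal{E}_{\theta,\eta}^{\beta}$, and the minor technicality of differentiating $\sqrt{\mathcal{E}_{\theta,\eta}^{\beta}}$ where the energy may vanish, which is why I invoke the comparison lemma rather than dividing naively; everything else is a repetition of the unperturbed Lyapunov estimate with one extra Cauchy--Schwarz term.
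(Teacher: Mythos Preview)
Your proposal is correct and follows the same Lyapunov template as the paper, but you package the key boundedness step differently. The paper introduces a \emph{shifted} energy
\[
\mathcal{E}_{\theta,\eta}^{\beta,\epsilon}(t)=\mathcal{E}_{\theta,\eta}^{\beta}(t)-\int_{t_0}^{t}\langle\theta(s)(x(s)-x^*)+p(s)^{\beta}\dot{x}(s),p(s)^{\beta}\epsilon(s)\rangle\,ds-\text{(same for }y\text{)},
\]
shows it is nonincreasing, and then applies Lemma~\ref{le:A2} (Brezis' Gronwall-type lemma) to the quantity $\mu(t)=\|\theta(x-x^*)+p^{\beta}\dot{x}\|+\|\theta(y-y^*)+p^{\beta}\dot{y}\|$ to bound it and hence bound $\mathcal{E}_{\theta,\eta}^{\beta,\epsilon}$ from below. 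You instead keep the original $\mathcal{E}_{\theta,\eta}^{\beta}$, derive the differential inequality $\dot{\mathcal{E}}_{\theta,\eta}^{\beta}\le 2\sqrt{2}\,p^{\beta}\|\epsilon\|\sqrt{\mathcal{E}_{\theta,\eta}^{\beta}}$, and integrate $\tfrac{d}{dt}\sqrt{\mathcal{E}_{\theta,\eta}^{\beta}}$ via a Bihari-type comparison. The two devices are equivalent (your comparison lemma is essentially the differential form of Lemma~\ref{le:A2}); your route is slightly more direct since you never need to argue separately that the shifted energy is bounded below, while the paper's route avoids the technicality of differentiating $\sqrt{\mathcal{E}_{\theta,\eta}^{\beta}}$ at its zeros.

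One minor presentational point: you write ``For (c)--(e) in Case~I'', but (c) is asserted in the theorem for \emph{all} admissible $\beta,\beta_0$, including Case~II. Your integration argument already covers this, since the coefficient $\tfrac{\beta_0}{2}>0$ in $N(t)$ regardless of case; just state (c) separately before specializing to Case~I for (d) and (e).
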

\begin{proof}
Define the  function  $\mathcal{E}_{\theta,\eta}^{\beta,\epsilon}:[t_0,+\infty)\to\mathbb{R}$ by 
\begin{eqnarray}\label{eq:energy_pertu_gen}
	&&\mathcal{E}_{\theta,\eta}^{\beta,\epsilon}(t) =\mathcal{E}_{\theta,\eta}^{\beta}(t)- \int^t_{t_0}\langle \theta(s)(x(s)-x^*)+p(s)^{\beta}\dot{x}(s),p(s)^\beta\epsilon(s)\rangle ds\nonumber \\
	&&\qquad\quad- \int^t_{t_0}\langle \theta(s)(y(s)-y^*)+p(s)^{\beta}\dot{y}(s),p(s)^\beta\epsilon(s)\rangle ds,
\end{eqnarray}
where $\mathcal{E}_{\theta,\eta}^{\beta}(t)$ is defined by \eqref{eq:energy_fun_gen},  $\theta(t)$ and $\eta(t)$  are taken as in \eqref{eq:theta_eta_gen}, i.e.,
\[ \theta(t) = \beta_0 p(t)^\beta\gamma(t)\quad \text{and}\quad	\eta(t) = -\beta_0 p(t)^{2\beta}((\beta_0+2\beta-1)\gamma(t)^2+\dot{\gamma}(t)).\]
By similar arguments  as in \eqref{eq:energy_der_gen}, we have
\begin{eqnarray}\label{eq:energy_dot_pertu_gen}
	&& \dot{\mathcal{E}}_{\theta,\eta}^{\beta,\epsilon}(t)+ (\beta_0-2\beta)p(t)^{2\beta}\gamma(t)(\mathcal{L}(x(t),y(t),\lambda^*)-\mathcal{L}(x^*,y^*,\lambda^*))\nonumber\\
	&&\quad+\frac{\beta_0}{2} p(t)^{2\beta}\gamma(t)\|Ax(t)+By(t)-b\|^2  \\
	&&\quad +(1-\beta-\beta_0)  p(t)^{2\beta}\gamma(t)(\|\dot{x}(t)\|^2+\|\dot{y}(t)\|^2+\|\dot{\lambda}(t)\|^2) \leq  0\nonumber
\end{eqnarray}
for $t\geq t_0$. Then 
\begin{equation}\label{eq:energy_dcrease_pertu}
 \mathcal{E}_{\theta,\eta}^{\beta,\epsilon}(t)\leq  \mathcal{E}_{\theta,\eta}^{\beta,\epsilon}(t_0),\quad \forall t\geq t_0,
\end{equation}
which gives, by definition of $\mathcal{E}_{\theta,\eta}^{\beta,\epsilon}(\cdot)$
\begin{eqnarray*}
	&&\frac{1}{2}\|\theta(t)(x(t)-x^*)+p(t)^{\beta}\dot{x}(t)\|^2+\frac{1}{2}\|\theta(t)(y(t)-y^*)+p(t)^{\beta}\dot{y}(t)\|^2\\
	&&\leq  \int^t_{t_0}\langle \theta(s)(x(s)-x^*)+p(s)^{\beta}\dot{x}(s)+\theta(s)(y(s)-y^*)+p(s)^{\beta}\dot{s}(s),p(s)^{\beta}\epsilon(s)\rangle ds\\
	&&\quad+\mathcal{E}_{\theta,\eta}^{\beta,\epsilon}(t_0)
\end{eqnarray*}
for any $t\geq t_0$.
Applying triangle inequality and Cauchy-Schwarz inequality, we get
\begin{eqnarray*}
	&&\frac{1}{2}(\|\theta(t)(x(t)-x^*)+p(t)^\beta\dot{x}(t)\|+\|\theta(t)(y(t)-y^*)+p(t)^{\beta}\dot{y}(t)\|)^2\\
	&&\leq2\int^t_{t_0} (\|\theta(s)(x(s)-x^*)+p(s)^\beta\dot{x}(s)\|+\|\theta(s)(y(s)-y^*)+p(s)^{\beta}\dot{y}(s)\|)\|p(s)^{\beta}\epsilon(s)\| ds\\
	&&\quad  +2 | \mathcal{E}_{\theta,\eta}^{\beta,\epsilon}(t_0)|.
\end{eqnarray*}
By Lemma \ref{le:A2}, we obtain
\begin{eqnarray*}
	&&\|\theta(t)(x(t)-x^*)+p(t)^r\dot{x}(t)\|+\|\theta(t)(y(t)-y^*)+p(t)^{\beta}\dot{y}(t)\| \\
	&&\qquad \leq 2\sqrt{ | \mathcal{E}_{\theta,\eta}^{\beta,\epsilon}(t_0)|}+2\int^{t}_{t_0}p(s)^{\beta}\|\epsilon(s)\|ds
\end{eqnarray*}
for any $t\geq t_0$.
This together with \eqref{con-ep-p} implies
\begin{eqnarray*}
\sup_{t\geq t_0}(\|\theta(t)(x(t)-x^*)+p(t)^{\beta}\dot{x}(t)\|+\|\theta(t)(y(t)-y^*)+p(t)^{\beta}\dot{y}(t)\|)< +\infty.
\end{eqnarray*}
From \eqref{eq:energy_pertu_gen}, we have
\begin{eqnarray*}
	&&  \mathcal{E}_{\theta,\eta}^{\beta,\epsilon}(t)+ \int^t_{t_0}\langle \theta(s)(x(s)-x^*)+p(s)^{\beta}\dot{x}(s),p(s)^{\beta}\epsilon(s)\rangle ds\\
	&&\qquad+\int^t_{t_0}\langle \theta(s)(y(s)-y^*)+p(s)^{\beta}\dot{s}(s),p(s)^{\beta}\epsilon(s)\rangle ds = \mathcal{E}_{\theta,\eta}^{\beta}(t)\geq 0, \forall t\geq t_0.
\end{eqnarray*}
This implies
\begin{eqnarray*}
	&& \inf_{t\geq t_0} \mathcal{E}_{\theta,\eta}^{\beta,\epsilon}(t)\geq -\sup_{t\geq t_0}(\|\theta(t)(x(t)-x^*)+p(t)^{\beta}\dot{x}(t)\|+\|\theta(t)(y(t)-y^*)+p(t)^{\beta}\dot{y}(t)\|)\\
	&&\qquad\times\int^{+\infty}_{t_0}p(s)^{\beta}\|\epsilon(s)\|ds>-\infty.
\end{eqnarray*}
This together with \eqref{eq:energy_dcrease_pertu} implies that $ \mathcal{E}_{\theta,\eta}^{\beta,\epsilon}(t)$ is bounded on $[t_0,+\infty)$. The rest of the proof is similar as the one of Theorem \ref{th:rate_gen}, and so we omit it.
%
\end{proof}

\begin{remark}
	The condition \eqref{con-ep-p} assumed in  Theorem \ref{th:rate_pertu_gen} is mild and it has been used in \cite{AttouchCCR2018} for  asymptotic analysis of  $IGS_{\gamma,\epsilon}$. Especially, in the case $\gamma(t)=\frac{\alpha}{t}$ with $\alpha>0$, the condition \eqref{con-ep-p} becomes  $\int^{+\infty}_{t_0} t^p \|\epsilon(t)\|dt<+\infty$ with $p=\min\lbrace 1,\frac{\alpha}{3}\rbrace$, which has been used in \cite{AttouchCP2018} and \cite{AttouchCR2019}.
\end{remark}

\begin{remark}
Suppose that  $\nabla f$ and $ \nabla g$ are locally Lipschitz continuous, $\gamma(t)$, $\delta(t)$, and $\epsilon(t)$ are locally integrable,  By Proposition \ref{pro:local_exist}, there exists a unique local solution $(x(t),y(t),$ $\lambda(t))$ of the dynamic \eqref{dy:dy_pertu}  defined on a maximal interval $[t_0,T)$ with $T\leq +\infty$.  Additionally, suppose that $\gamma(t)$ ,$\delta(t)$, $\epsilon(t)$ satisfy the assumptions of Theorem \ref{th:rate_pertu_gen}. By similar argument as in the proof of Theorem \ref{th:th_exist_gen}, we can prove  $T=+\infty$. So the existence and uniqueness of a global solution of the dynamic \eqref{dy:dy_pertu} is established.
\end{remark}

Similarly, we can extend  the convergence rate results established in Section 3 to the  dynamic \eqref{dy:dy_pertu} with $\gamma(t)=\frac{\alpha}{t^r}$.

\begin{theorem}\label{th:rate_pertu_rneg}
	 Let $t_0\geq 1$,  $\gamma(t) =  \frac{\alpha}{t^{r}}$ with $r\in(-1,0]$ and  $\alpha>0$, $\sigma(t)=\frac{t}{2r_0}$ with $r_0>\frac{r+1}{2}$. Let  $\epsilon:[t_0,+\infty)\to\mathbb{R}$ be a locally integrable  function satisfying 
$$\int^{+\infty}_{t_0} t^\frac{r+1}{2}\|\epsilon(t)\|dt<+\infty.$$ 	Suppose that $(x^*,y^*,\lambda^*)\in \Omega$ and that  $(x(t),y(t),\lambda(t))$ is a global solution of the  dynamic \eqref{dy:dy_pertu}. Then, the following conclusions hold:
	\begin{itemize}
		\item [(a)]$\mathcal{L}(x(t),y(t),\lambda^*)-\mathcal{L}(x^*,y^*,\lambda^*)=\mathcal{O}(t^{-(r+1)}).$
		\item [(b)] $\|Ax(t)+By(t)-b\|=\mathcal{O}(t^{-\frac{r+1}{2}}).$
		\item [(c)]  $\int^{+\infty}_{t_0}t^{r}\|Ax(t)+By(t)-b\|^2 dt <+\infty.$
		\item [(d)]  $\int^{+\infty}_{t_0}t(\|\dot{x}(t)\|^2+\|\dot{y}(t)\|^2+\|\dot{\lambda}(t)\|^2) dt <+\infty.$
   		\item[(e)] $\int^{+\infty}_{t_0}t^{r}(	\mathcal{L}(x(t),y(t),\lambda^*)-\mathcal{L}(x^*,y^*,\lambda^*)) dt <+\infty.  $
		\item  [(f)] $\|\dot{x}(t)\|+\|\dot{y}(t)\|+\|\dot{\lambda}(t)\|  =\mathcal{O}(t^{-\frac{r+1}{2}})$ .
\end{itemize}
\end{theorem}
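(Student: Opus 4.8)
The plan is to mimic the proof of Theorem~\ref{th:rate_pertu_gen}, replacing the weight $p(t)^\beta$ by $t^\rho$ with $\rho=\frac{r+1}{2}$ and using the energy function \eqref{eq:energy_tr} together with the choices \eqref{eq:theta_eta_rneg} of $\theta(t)$ and $\eta(t)$. First I would introduce the perturbed energy
\[
	E_{\theta,\eta}^{\rho,\epsilon}(t)=E_{\theta,\eta}^{\rho}(t)-\int_{t_0}^t\langle\theta(s)(x(s)-x^*)+s^\rho\dot{x}(s),\,s^\rho\epsilon(s)\rangle\,ds-\int_{t_0}^t\langle\theta(s)(y(s)-y^*)+s^\rho\dot{y}(s),\,s^\rho\epsilon(s)\rangle\,ds.
\]
Since $\epsilon(t)$ enters only the $x$- and $y$-equations of \eqref{dy:dy_pertu}, multiplying those equations by $t^\rho$ and recomputing $\dot{E}_1(t)$ and $\dot{E}_2(t)$ produces, compared with the unperturbed computation in Section~3, exactly the extra terms $\langle\theta(t)(x(t)-x^*)+t^\rho\dot{x}(t),t^\rho\epsilon(t)\rangle$ and $\langle\theta(t)(y(t)-y^*)+t^\rho\dot{y}(t),t^\rho\epsilon(t)\rangle$, which are cancelled by the derivatives of the two correction integrals. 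Hence $\dot{E}_{\theta,\eta}^{\rho,\epsilon}(t)$ satisfies the very same estimate \eqref{eq:energy_th_rneg} as in the proof of Theorem~\ref{th:th_rate_rneg}: choosing $t_1\ge t_0$ with $\alpha>(4r_0+r+1)t_1^{r-1}$ (possible since $\alpha>0$ and $r\le 0$), one obtains $\dot{E}_{\theta,\eta}^{\rho,\epsilon}(t)\le 0$ for all $t\ge t_1$, so $E_{\theta,\eta}^{\rho,\epsilon}$ is nonincreasing on $[t_1,+\infty)$.

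Next I would upgrade this monotonicity to boundedness, exactly as in Theorem~\ref{th:rate_pertu_gen}. From $E_{\theta,\eta}^{\rho,\epsilon}(t)\le E_{\theta,\eta}^{\rho,\epsilon}(t_1)$ and the nonnegativity of $E_0$, $E_3$ and the $\eta$-terms (using $\eta(t)\ge r_0\alpha>0$ on $[t_1,+\infty)$ from \eqref{eq:ineq_eta_rneg}), one bounds $\frac{1}{2}\|\theta(t)(x(t)-x^*)+t^\rho\dot{x}(t)\|^2+\frac{1}{2}\|\theta(t)(y(t)-y^*)+t^\rho\dot{y}(t)\|^2$ by a constant plus the two perturbation integrals; applying the triangle and Cauchy--Schwarz inequalities and then Lemma~\ref{le:A2} yields
\[
	\|\theta(t)(x(t)-x^*)+t^\rho\dot{x}(t)\|+\|\theta(t)(y(t)-y^*)+t^\rho\dot{y}(t)\|\le 2\sqrt{|E_{\theta,\eta}^{\rho,\epsilon}(t_1)|}+2\int_{t_1}^t s^\rho\|\epsilon(s)\|\,ds,
\]
which is bounded on $[t_1,+\infty)$ precisely because $\rho=\frac{r+1}{2}$ matches the hypothesis $\int_{t_0}^{+\infty}t^{\frac{r+1}{2}}\|\epsilon(t)\|\,dt<+\infty$. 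Feeding this back into the definition of $E_{\theta,\eta}^{\rho,\epsilon}$ and using $E_{\theta,\eta}^{\rho}(t)\ge 0$ shows that $E_{\theta,\eta}^{\rho,\epsilon}(\cdot)$ is also bounded below, hence bounded, on $[t_1,+\infty)$; consequently $E_{\theta,\eta}^{\rho}(\cdot)$ is bounded there as well.

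With $E_{\theta,\eta}^{\rho}$ bounded, the six conclusions follow verbatim from the argument in Theorem~\ref{th:th_rate_rneg}: (a) from boundedness of $E_0(t)=t^{r+1}(\mathcal{L}(x(t),y(t),\lambda^*)-\mathcal{L}(x^*,y^*,\lambda^*))$; (b) from (a) and the convexity estimate \eqref{eq:th_gen_constain}; (c)--(e) by integrating the differential inequality \eqref{eq:energy_th_rneg} over $[t_1,+\infty)$, the integrands being nonnegative since $r_0>\frac{r+1}{2}$ and $\alpha>0$; and (f) by combining $\eta(t)\ge r_0\alpha$ with the displayed bound on $\|\theta(t)(x(t)-x^*)+t^\rho\dot{x}(t)\|$, just as there, to get $\|\dot{x}(t)\|+\|\dot{y}(t)\|+\|\dot{\lambda}(t)\|=\mathcal{O}(t^{-\frac{r+1}{2}})$. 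I expect the only genuine work to be the bookkeeping in the first step --- verifying that the perturbation contributions to $\dot{E}_1$ and $\dot{E}_2$ are exactly the terms removed by the correction integrals --- together with the careful application of Lemma~\ref{le:A2}; everything after that is a transcription of the unperturbed proof, and the matching of the exponent $\rho=\frac{r+1}{2}$ with the decay hypothesis on $\epsilon$ is what makes the Gronwall step close.
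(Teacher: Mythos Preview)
Your proposal is correct and follows exactly the approach the paper intends: the paper does not spell out a proof of Theorem~\ref{th:rate_pertu_rneg} but merely says the results of Section~3 extend ``similarly'' to the perturbed dynamic, and your argument is precisely that extension---introduce the perturbed energy $E_{\theta,\eta}^{\rho,\epsilon}$ with $\rho=\frac{r+1}{2}$ and the choices \eqref{eq:theta_eta_rneg}, obtain the differential inequality \eqref{eq:energy_th_rneg} for it on $[t_1,+\infty)$, use Lemma~\ref{le:A2} and the integrability hypothesis on $t^{\frac{r+1}{2}}\|\epsilon(t)\|$ to bound the energy, and then read off (a)--(f) as in Theorem~\ref{th:th_rate_rneg}. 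One small point worth making explicit in your write-up of (f): the displayed Gronwall bound only controls the $x$- and $y$-blocks, so for $\dot{\lambda}$ you should note separately that boundedness of $E_{\theta,\eta}^{\rho}$ gives boundedness of $E_3$ and hence of $\|\theta(t)(\lambda(t)-\lambda^*)+t^\rho\dot{\lambda}(t)\|$ and $\eta(t)\|\lambda(t)-\lambda^*\|^2$.
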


\begin{theorem}\label{th:rate_pertu_rpos}
	Let $t_0\geq 1$, $\gamma(t) =  \frac{\alpha}{t^{r}}$ with $r\in[0,1)$ and $\alpha>0$, $\sigma(t)=\frac{t}{2r_0}$ with $r_0>r$. Let $\epsilon:[t_0,+\infty)\to\mathbb{R}$ be a locally integrable function satisfying
   \[ \int^{+\infty}_{t_0}t^{r}\|\epsilon(t)\|dt<+\infty.\]
	Suppose that  $(x^*,y^*,\lambda^*)\in \Omega$ and that $(x(t),y(t),\lambda(t))$ is a global solution of the dynamic \eqref{dy:dy_pertu}. Then, the following conclusions hold:
	\begin{itemize}
		\item [(a)]$\mathcal{L}(x(t),y(t),\lambda^*)-\mathcal{L}(x^*,y^*,\lambda^*)=\mathcal{O}(t^{-2r})$.
		\item [(b)] $\|Ax(t)+By(t)-b\|= \mathcal{O}(t^{-r}) $.
		\item [(c)]  $\int^{+\infty}_{t_0}t^{2r-1}\|Ax(t)+By(t)-b\|^2 dt <+\infty.$
		\item [(d)]  $\int^{+\infty}_{t_0}t^r(\|\dot{x}(t)\|^2+\|\dot{y}(t)\|^2+\|\dot{\lambda}(t)\|^2) dt <+\infty.$
  		\item[(e)] $\int^{+\infty}_{t_0}t^{2r-1}(	\mathcal{L}(x(t),y(t),\lambda^*)-\mathcal{L}(x^*,y^*,\lambda^*)) dt <+\infty. $
		\item  [(f)] $\|\dot{x}(t)\|+ \|\dot{y}(t)\|+\|\dot{\lambda}(t)\|=\mathcal{O}(t^{-r})$. 
\end{itemize}
\end{theorem}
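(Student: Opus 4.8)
The plan is to combine the Lyapunov analysis of Theorem~\ref{th:th_rate_rpos} with the perturbation-absorbing device used in the proof of Theorem~\ref{th:rate_pertu_gen}. Keeping $\rho=r$ and taking $\theta(t),\eta(t)$ exactly as in \eqref{eq:theta_eta_rpos}, I would introduce the perturbed energy function
\[
\begin{aligned}
E_{\theta,\eta}^{\rho,\epsilon}(t)={}&E_{\theta,\eta}^{\rho}(t)-\int_{t_0}^{t}\big\langle \theta(s)(x(s)-x^*)+s^{\rho}\dot{x}(s),\,s^{\rho}\epsilon(s)\big\rangle\,ds\\
&-\int_{t_0}^{t}\big\langle \theta(s)(y(s)-y^*)+s^{\rho}\dot{y}(s),\,s^{\rho}\epsilon(s)\big\rangle\,ds,
\end{aligned}
\]
where $E_{\theta,\eta}^{\rho}$ is defined by \eqref{eq:energy_tr}. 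Differentiating along the perturbed dynamic \eqref{dy:dy_pertu}, the two new $\epsilon(t)$-contributions produced by the $x$- and $y$-equations are cancelled exactly by the derivatives of the two integral corrections, so the identities \eqref{eq:eq_rpos_cond1}--\eqref{eq:eq_rpos_cond4} remain available and the computation leading to \eqref{eq:energy_dot_rpos} carries over verbatim. This yields, for $t\ge t_1$ with $t_1\ge t_0$ chosen so that $\alpha>(4r_0+2r)t_1^{r-1}$ (possible since $\alpha>0$ and $r<1$),
\[
\begin{aligned}
&\dot{E}_{\theta,\eta}^{\rho,\epsilon}(t)+r_0t^{2r-1}\|Ax(t)+By(t)-b\|^2+\frac{\alpha}{2}t^r\big(\|\dot{x}(t)\|^2+\|\dot{y}(t)\|^2+\|\dot{\lambda}(t)\|^2\big)\\
&\qquad+2(r_0-r)t^{2r-1}\big(\mathcal{L}(x(t),y(t),\lambda^*)-\mathcal{L}(x^*,y^*,\lambda^*)\big)\le 0,
\end{aligned}
\]
so $E_{\theta,\eta}^{\rho,\epsilon}$ is nonincreasing, hence bounded above by $E_{\theta,\eta}^{\rho,\epsilon}(t_1)$, on $[t_1,+\infty)$.

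Next I would upgrade this to genuine boundedness of $E_{\theta,\eta}^{\rho,\epsilon}$, imitating the corresponding step in Theorem~\ref{th:rate_pertu_gen}. From $E_{\theta,\eta}^{\rho,\epsilon}(t)\le E_{\theta,\eta}^{\rho,\epsilon}(t_1)$ and the definition of $E_{\theta,\eta}^{\rho,\epsilon}$, the quantity $\frac12\|\theta(t)(x(t)-x^*)+t^{\rho}\dot x(t)\|^2+\frac12\|\theta(t)(y(t)-y^*)+t^{\rho}\dot y(t)\|^2$ is bounded by a constant plus a line integral of $\big(\|\theta(s)(x-x^*)+s^\rho\dot x\|+\|\theta(s)(y-y^*)+s^\rho\dot y\|\big)s^\rho\|\epsilon(s)\|$; the Gronwall-type Lemma~\ref{le:A2}, together with the hypothesis $\int_{t_0}^{+\infty}t^{r}\|\epsilon(t)\|\,dt<+\infty$ (recall $\rho=r$), then gives $\sup_{t\ge t_1}\big(\|\theta(t)(x(t)-x^*)+t^{r}\dot x(t)\|+\|\theta(t)(y(t)-y^*)+t^{r}\dot y(t)\|\big)<+\infty$. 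Since $E_{\theta,\eta}^{\rho}(t)\ge0$, rearranging the definition of $E_{\theta,\eta}^{\rho,\epsilon}$ and inserting this supremum bound together with $\int_{t_0}^{+\infty}t^r\|\epsilon(t)\|\,dt<+\infty$ produces $\inf_{t\ge t_1}E_{\theta,\eta}^{\rho,\epsilon}(t)>-\infty$; hence $E_{\theta,\eta}^{\rho,\epsilon}$, and so also $E_{\theta,\eta}^{\rho}$, is bounded on $[t_1,+\infty)$.

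From here the six conclusions follow exactly as in Theorem~\ref{th:th_rate_rpos}. Boundedness of $E_{\theta,\eta}^{\rho}$ gives $\mathcal{L}(x(t),y(t),\lambda^*)-\mathcal{L}(x^*,y^*,\lambda^*)=\mathcal{O}(t^{-2r})$, which is (a); combined with the convexity inequality \eqref{eq:th_gen_constain} (which uses only \eqref{eq:saddle_point} and convexity of $f$ and $g$, not the dynamics) it gives (b). Integrating the displayed derivative inequality over $[t_1,+\infty)$ yields (c)--(e), all the coefficients being strictly positive because $r_0>r>0$ and $\alpha>0$. Finally, using $\eta(t)\ge r_0\alpha t^{r-1}$ from \eqref{eq:ineq_eta_rpos} one bounds $t^{(r-1)/2}\|x(t)-x^*\|$ by a constant, and then $\|\theta(t)(x(t)-x^*)+t^r\dot x(t)\|\le\sqrt{2E_{\theta,\eta}^{\rho}(t_1)}$ with $\theta(t)=2r_0t^{r-1}$ and $t^{(r-1)/2}\le 1$ gives $\|\dot x(t)\|=\mathcal{O}(t^{-r})$, and likewise for $\dot y$ and $\dot\lambda$, which is (f); on the compact interval $[t_0,t_1]$ all quantities are continuous, so the asymptotic estimates are unaffected. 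I expect the main obstacle to be the derivative computation for $E_{\theta,\eta}^{\rho,\epsilon}$: one must verify that the two integral corrections absorb precisely the perturbation contributions coming from the first two equations of \eqref{dy:dy_pertu}, so that \eqref{eq:eq_rpos_cond1}--\eqref{eq:eq_rpos_cond4} can be reused without change, and then that the resulting inequality is still sharp enough to drive the Gronwall estimate of Lemma~\ref{le:A2}. A small point worth checking is that the statement includes the endpoint $r=0$ (unlike Theorem~\ref{th:th_rate_rpos}), where the case-(b) scaling $\rho=r$ still makes all the sign conditions used above hold once $r_0>0$ and $t_1$ is large.
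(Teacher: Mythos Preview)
Your proposal is correct and follows precisely the route the paper intends: it states Theorems~\ref{th:rate_pertu_rneg} and~\ref{th:rate_pertu_rpos} without proof after the sentence ``Similarly, we can extend the convergence rate results established in Section~3 to the dynamic \eqref{dy:dy_pertu}\ldots'', i.e., combine the Lyapunov function $E_{\theta,\eta}^{\rho}$ of Theorem~\ref{th:th_rate_rpos} with the perturbation-absorbing integrals and the Gronwall argument of Theorem~\ref{th:rate_pertu_gen}, exactly as you do. One cosmetic slip: in your derivation of (f) you write $\|\theta(t)(x(t)-x^*)+t^r\dot x(t)\|\le\sqrt{2E_{\theta,\eta}^{\rho}(t_1)}$, but in the perturbed setting $E_{\theta,\eta}^{\rho}$ is only bounded, not nonincreasing, so the right-hand side should be $\sqrt{2\sup_{t\ge t_1}E_{\theta,\eta}^{\rho}(t)}$ (which you have already established); the rest of the argument is unaffected.
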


\begin{remark}
	 When $\gamma(t)=\frac{\alpha}{t^r}$ with $r\in(0,1)$ and $\alpha>0$, the assumptions
$$\int^{+\infty}_{t_0} t^r\|\epsilon(t)\|dt<+\infty \quad\text{and}\quad \int^{+\infty}_{t_0} t^\frac{r+1}{2}\|\epsilon(t)\|dt<+\infty$$   have been used in \cite{Balti2016} for convergence rate analysis of $IGS_{\gamma,\epsilon}$.  For more results on asymptotic analysis of dynamical systems with perturbations associated with unstrained optimization problems, we refer the reader to \cite{Jendoubi,Haraux2012,Sebbouh2020}.
\end{remark}

\section{Conclusion} 
In this paper, we have proposed an inertial primal-dual dynamical system for a separable convex optimization problem with linear equality constraints. By using the Lyapunov analysis approach, we investigate the convergence rates of the trajectories generated by the dynamical system under different choices of the damping functions. We have also shown that convergence  rate results  established  are preserved when   small  perturbations are added to the inertial primal-dual dynamical system. The results obtained improves the results of  Zeng et al. \cite{ZengJ2019},  where convergence rates of a second-order dynamical system based on the primal-dual framework for the problem \eqref{eq:ques} with $g(x)\equiv 0$ and $B = 0$ were established.  Our main results can be also viewed as analogs of the ones  in \cite{AttouchCCR2018}, where  the convergence rate analysis of $(IGS_{\gamma})$  associated with the unconstrained optimization problem \eqref{eq:min_fun}  were derived.  
\appendix
\section{Some auxiliary results} 

The following lemmas have been used in the analysis of the convergence properties of the dynamical systems.
\begin{lemma}\cite[Theorem 2.1]{AttouchCCR2018} \label{le:A1} 
 Let $t_0\geq 0$, $\gamma:[t_0,+\infty)\to(0,+\infty)$ be a nonincreasing  and twice continuously differentiable function satisfying $ \ddot{\gamma}(t)\geq 2\beta^2 \gamma(t)^3$ for some $\beta>0$. Then 
 $\dot{\gamma}(t)\leq -\beta \gamma(t)^2$.
\end{lemma}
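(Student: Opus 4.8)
The plan is to combine the monotonicity forced by the hypotheses with a one-dimensional Lyapunov function. First I would record two elementary consequences of the assumptions: since $\gamma$ is nonincreasing, $\dot\gamma(t)\le 0$ for all $t\ge t_0$; and since $\ddot\gamma(t)\ge 2\beta^2\gamma(t)^3>0$, the derivative $\dot\gamma$ is nondecreasing. A nondecreasing function bounded above by $0$ has a limit $L\le 0$ as $t\to+\infty$, and $L<0$ would force $\gamma(t)\to-\infty$, contradicting $\gamma>0$; hence $\dot\gamma(t)\to 0$. Similarly $\gamma$ is nonincreasing and positive, so $\gamma(t)\to\ell\ge 0$, and $\ell>0$ would give $\ddot\gamma(t)\ge 2\beta^2\ell^3>0$ for all $t$, forcing $\dot\gamma(t)\to+\infty$, again impossible; hence $\ell=0$, i.e. $\gamma(t)\to 0$.

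Next I would introduce the energy $E(t):=\tfrac12\dot\gamma(t)^2-\tfrac{\beta^2}{2}\gamma(t)^4$, which is $C^1$ because $\gamma\in C^2$. Differentiating gives $\dot E(t)=\dot\gamma(t)\bigl(\ddot\gamma(t)-2\beta^2\gamma(t)^3\bigr)\le 0$, since the first factor is $\le 0$ and the second is $\ge 0$ by hypothesis; thus $E$ is nonincreasing on $[t_0,+\infty)$. By the limits established above, $E(t)\to 0$ as $t\to+\infty$. A nonincreasing function converging to $0$ must satisfy $E(t)\ge 0$ for every $t\ge t_0$, hence $\dot\gamma(t)^2\ge\beta^2\gamma(t)^4$, i.e. $|\dot\gamma(t)|\ge\beta\gamma(t)^2$; combining with $\dot\gamma(t)\le 0$ yields $\dot\gamma(t)\le-\beta\gamma(t)^2$, as claimed.

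The only delicate step is the asymptotic analysis: one must rule out a strictly positive limit of $\gamma$ (equivalently, a strictly negative limit of $\dot\gamma$), because otherwise $E(t)$ would tend to a strictly negative value and the monotonicity argument would be vacuous. This is precisely where the full strength of the hypothesis $\ddot\gamma\ge 2\beta^2\gamma^3$, rather than merely $\ddot\gamma\ge 0$, is used, via the observation that a uniform positive lower bound on $\ddot\gamma$ is incompatible with $\dot\gamma\le 0$. Everything else reduces to a single differentiation and a passage to the limit.
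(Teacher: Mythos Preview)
Your proof is correct. Note that the paper does not supply its own proof of this lemma; it simply cites \cite[Theorem~2.1]{AttouchCCR2018}. Your argument---establishing $\dot\gamma(t)\to 0$ and $\gamma(t)\to 0$, then showing the energy $E(t)=\tfrac12\dot\gamma(t)^2-\tfrac{\beta^2}{2}\gamma(t)^4$ is nonincreasing with limit zero, hence nonnegative---is in fact the same energy-function approach used in the original reference, so there is nothing to contrast.
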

%
 
\begin{lemma} \cite[Lemma A.5]{Brezis1973}\label{le:A2}
	Let $\omega:[t_0,T]\to [0,+\infty)$ be integrable, and $C\geq 0$. Suppose $\mu:[t_0,T]\to R $ is continuous and 
	\[ \frac{1}{2}\mu(t)^2\leq \frac{1}{2}C^2+\int^{t}_{t_0}\omega(s)\mu(s)ds\]
	for all $t\in[t_0,T]$. Then $|\mu(t)|\leq C+\int^t_{t_0}\omega(s)ds$ for all $t\in[t_0,T]$.
\end{lemma}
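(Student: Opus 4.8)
The plan is to reduce the statement to a one-dimensional comparison argument for a suitable absolutely continuous majorant of $\mu$. First I would treat the right-hand side as a function in its own right: set
\[
\phi(t) = \tfrac{1}{2}C^2 + \int_{t_0}^t \omega(s)\mu(s)\,ds,
\]
so that the hypothesis reads $\tfrac{1}{2}\mu(t)^2 \le \phi(t)$ for every $t\in[t_0,T]$. In particular $\phi(t)\ge 0$ and $|\mu(t)|\le\sqrt{2\phi(t)}$ on $[t_0,T]$, and since $\omega\mu$ is integrable (the product of a continuous and an integrable function), $\phi$ is absolutely continuous with $\dot\phi(t)=\omega(t)\mu(t)$ for almost every $t$.

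The natural idea is then to differentiate $\sqrt{2\phi(t)}$ and exploit $\dot\phi=\omega\mu\le\omega|\mu|\le\omega\sqrt{2\phi}$ (here the sign condition $\omega\ge 0$ is essential) to obtain $\frac{d}{dt}\sqrt{2\phi(t)}\le\omega(t)$; integrating from $t_0$ would give $\sqrt{2\phi(t)}\le C+\int_{t_0}^t\omega$, whence $|\mu(t)|\le\sqrt{2\phi(t)}$ delivers the claim. The single obstacle is that $\phi$ may vanish at some points, so that $\sqrt{2\phi}$ need not be differentiable there and the division by $\sqrt{2\phi}$ is illegitimate. I would circumvent this by a standard regularisation: for each $\epsilon>0$ put
\[
\phi_\epsilon(t) = \tfrac{1}{2}(C+\epsilon)^2 + \int_{t_0}^t \omega(s)\mu(s)\,ds = \phi(t) + \bigl(C\epsilon+\tfrac{1}{2}\epsilon^2\bigr).
\]
Because $\phi\ge 0$ and $C\ge 0$, we have $\phi_\epsilon(t)\ge C\epsilon+\tfrac{1}{2}\epsilon^2>0$ on all of $[t_0,T]$, so $\phi_\epsilon$ is bounded away from zero and $\sqrt{2\phi_\epsilon}$ is absolutely continuous.

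With this in hand I would carry out the comparison on $\phi_\epsilon$: almost everywhere
\[
\frac{d}{dt}\sqrt{2\phi_\epsilon(t)} = \frac{\omega(t)\mu(t)}{\sqrt{2\phi_\epsilon(t)}} \le \frac{\omega(t)|\mu(t)|}{\sqrt{2\phi_\epsilon(t)}} \le \omega(t),
\]
the final step using $|\mu(t)|\le\sqrt{2\phi(t)}\le\sqrt{2\phi_\epsilon(t)}$ together with $\omega\ge 0$. Integrating from $t_0$ and noting $\sqrt{2\phi_\epsilon(t_0)}=C+\epsilon$ gives $\sqrt{2\phi_\epsilon(t)}\le C+\epsilon+\int_{t_0}^t\omega(s)\,ds$. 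Since $|\mu(t)|\le\sqrt{2\phi(t)}\le\sqrt{2\phi_\epsilon(t)}$, I obtain $|\mu(t)|\le C+\epsilon+\int_{t_0}^t\omega(s)\,ds$ for every $\epsilon>0$; letting $\epsilon\downarrow 0$ yields $|\mu(t)|\le C+\int_{t_0}^t\omega(s)\,ds$ for all $t\in[t_0,T]$, as required. The only point beyond the regularisation that warrants care is the justification that $\sqrt{2\phi_\epsilon}$ is absolutely continuous, which follows since it is the composition of the absolutely continuous function $\phi_\epsilon$ with the map $x\mapsto\sqrt{2x}$, which is Lipschitz on $[C\epsilon+\tfrac{1}{2}\epsilon^2,\infty)$; this legitimises the use of the fundamental theorem of calculus in the integration step.
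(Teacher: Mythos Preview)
Your proof is correct. The paper does not give its own proof of this lemma: it merely quotes the statement and cites \cite[Lemma A.5]{Brezis1973}, so there is nothing in the paper to compare against. Your argument---bounding $|\mu|$ by $\sqrt{2\phi}$, regularising $\phi$ to $\phi_\epsilon$ to avoid the zero set, differentiating $\sqrt{2\phi_\epsilon}$, and letting $\epsilon\downarrow 0$---is in fact the standard proof of this Gronwall-type inequality and matches what one finds in Br\'ezis.
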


\bibliographystyle{siam}
\bibliography{references}

\end{document}